\newtheorem{thm}{Theorem}
\newtheorem{prop}[thm]{Proposition}
\theoremstyle{definition}
\newtheorem{defn}[thm]{Definition}
\theoremstyle{remark}
\newtheorem{rmk}[thm]{Remark}
\newtheorem{exam}[thm]{Example}
\newcommand{\co}{\colon\thinspace}
\newcommand{\mb}[1]{\mathbb{#1}}
\newcommand{\mf}[1]{\mathfrak{#1}}
\newcommand{\op}{{op}}
\newcommand{\too}{\xrightarrow}
\newcommand{\Pres}{\mathcal{P}}
\newcommand{\cC}{\mathcal{C}}
\newcommand{\dD}{\mathcal{D}}
\DeclareMathOperator{\GL}{GL}
\DeclareMathOperator{\Ext}{Ext}
\DeclareMathOperator{\Hom}{Hom}
\DeclareMathOperator{\Map}{Map}
\DeclareMathOperator{\End}{End}
\DeclareMathOperator{\Aut}{Aut}
\DeclareMathOperator{\Fun}{Fun}
\DeclareMathOperator{\Sq}{Sq}
\DeclareMathOperator{\Pic}{Pic}
\DeclareMathOperator{\Br}{Br}
\DeclareMathOperator{\Alg}{Alg}
\DeclareMathOperator{\Cat}{Cat}
\DeclareMathOperator{\CAlg}{CAlg}
\DeclareMathOperator{\LMod}{LMod}
\DeclareMathOperator{\RMod}{RMod}
\DeclareMathOperator{\Mod}{Mod}
\DeclareMathOperator{\pic}{\mf{pic}}
\DeclareMathOperator{\br}{\mf{br}}
\DeclareMathOperator{\ZPic}{{\mb P}ic}
\DeclareMathOperator{\gl}{\mf{gl}}
\title{Adjoining roots in homotopy theory}
\author{Tyler Lawson}
\begin{document}
\maketitle

\begin{abstract}
  We use a ``twisted group algebra'' method to constructively adjoin
  formal radicals $\sqrt[n]{\alpha}$, for $\alpha$ a unit in a
  commutative ring spectrum or an invertible object in a symmetric
  monoidal $\infty$-category. We show that this construction is
  classified by maps from Eilenberg--Mac Lane objects to the unit
  spectrum $\gl_1$, the Picard spectrum $\pic$, and the Brauer
  spectrum $\br$.
\end{abstract}

\section{Introduction}

Given a commutative ring $R$ and an element $\alpha \in R$, we can
adjoin a formal radical $\sqrt[n]{\alpha}$ to $R$ by embedding $R$
into the extension ring $R[x]/(x^n - \alpha)$. These ring extensions
come equipped with a ready-made basis $\{1,x,\dots,x^{n-1}\}$ over
$R$ and are fundamental constructions in algebra. In the derived
setting, however, it is less clear when these types of constructions
are possible. Given a commutative ring spectrum $R$ and an element in
$\alpha \in \pi_0 R$, one can construct a commutative $R$-algebra with
an $n$'th root of $\alpha$ using the same type of presentation in
terms of generators and relations. However, away from characteristic
zero the universal property enjoyed by this construction is not as
strong and its coefficient ring can be unpredictable.

We can try instead to lift the algebra directly by constructing a
commutative $R$-algebra $S$ with a map $R \to S$ such that, on
coefficient rings, we have the algebraic extension:
$\pi_* S \cong (\pi_* R)[x] / (x^n - \alpha)$. Depending on $\alpha$,
such an extension may not be possible or may not be unique. It is
always possible to adjoin roots of $1$, because those algebras are
realized by the group algebras $R[C_n]$ of finite cyclic groups. If both
$\alpha$ and $n$ are units in $\pi_0 R$ then the resulting extension
on coefficient rings is \emph{\'etale}, and the obstruction theory of
Robinson \cite{robinson-gammahomology} or Goerss--Hopkins
\cite{goerss-hopkins-summary} can be used to show that the extension
ring $S$ exists, is essentially unique, and has a universal property
among $R$-algebras with such a root adjoined. If $2$ is invertible,
this means that we can adjoint $\sqrt{-1}$.  However,
Schw\"anzl--Vogt--Waldhausen showed in
\cite{schwanzl-waldhausen-vogt-roots}, using topological Hochschild
homology, that it is impossible to adjoint a square root of $-1$ to
the sphere spectrum. A different argument of Hopkins with $K(1)$-local
power operations shows that the $p$-complete $K$-theory spectrum
cannot admit $p$'th roots of unity \cite[A.6.iii]{tmforientation}, and
this was further generalized by Devalapurkar to $K(n)$-local theory
\cite{devalapurkar-roots}. Further difficulties appear when attempting
to adjoin a root that appears in a nonzero degree.

Our starting observation is that these formal radicals are a special
case of twisted group algebras. Given an abelian group extension
$0 \to G \to E \to A \to 0$ and a group homomorphism $G \to R^\times$,
we can form the relative tensor product
$\mb Z[E] \otimes_{\mb Z[G]} R$. The elements of $A$ lift to a basis
over $R$, and the resulting algebra differs from the group algebra
$R[A]$ by this central extension. Moveover, all such extensions can be
constructed in a universal case by pushing forward the extension
from $G$ to $R^\times$.

This process can be applied to ring spectra. We will begin by showing
that, when it is possible to construct similar extensions of the
spectrum of units $\gl_1(R)$, we can give systematic constructions of
formal radicals and other twisted group algebras, and obstructions are
detectable with sufficient knowledge of $\gl_1(R)$. By then
reinterpreting these constructions as Thom spectra for maps to
$\pic(R)$, we can use recent work of Antol\'in-Camarena--Barthel
\cite{antolin-camarena-barthel-thom} to both generalize this and allow
us to identify these algebras as having a universal property. This
recovers several constructions: adjoining $n$'th roots of elements to
a ring spectrum where $n$ is invertible, usually carried out using
obstruction thery, and adjoining similar roots to elements in gradings
outside zero. There are also new constructions: we find that we can
extend the first Postnikov stage $\tau_{\leq 1} \mb S_{(2)}$ of the
$2$-local sphere by adjoining $\sqrt{D}$ for $D \equiv 1$ mod $4$.

Once we have accomplished this, our second goal will be to dig one
categorical level down.

The same methods can be applied to adjoin formal radicals of elements
in the Picard group. This allows us to take an extension of the Picard
spectrum $\pic(R)$ and use it to embed the category of $R$-modules
into a graded category with a larger group of invertible objects. This
formalism recovers algebraic examples, such as Rezk's $\omega$-twisted
tensor product for $\mb Z/2$-graded modules
\cite{rezk-wilkerson}. There are also new topological examples: if $R$
is an $MU$-algebra we can embed the category $\LMod_{R}$ of
$R$-modules, where integer suspensions are possible, into a larger
category $\prod_{\mb Q/Z} \LMod_{R}$ with a symmetric monoidal
structure that allows suspensions by elements of
$\mb Q \times \mb Z/2$. This is also possible for modules over the
topological $K$-theory spectrum $ku$ and the algebraic $K$-theory
spectrum $K\mb C$. Although our focus is on ring spectra, many of the
results are proved in the generality of presentable symmetric monoidal
$\infty$-categories.

We finally close with the observation that algebraic $K$-theory
relates these two processes.

\subsection*{Further directions}
  
A first issue is that our discussion of adjoining roots is less
satisfying for units outside degree zero. In particular, the
identification of such units is somewhat roundabout. Ideally a
solution to this problem would make use of a spectrum of graded units
similar to those developed by Sagave \cite{sagave-gradedunits}, and in
particular his construction of $\mathrm{bgl}_1^* R$.

Second, we restrict our attention to strictly commutative objects
(meaning $E_\infty$-ring objects). The constructions in this paper
should have interesting and useful $E_n$-variants, making use of the
iterated classifying spaces $B^{(n)} GL_1 R$.

Finally, we study unit groups because they are more easily analyzed via
their associated spectrum. This means that we lose any ability to
extract formal radicals of nonunit elements. We are hopeful that the
future will bring a better understanding of the structure theory of
$E_\infty$-spaces, allowing us to move beyond unit groups to
effectively study multiplicative monoids.

\subsection*{Conventions and background}

Our paper is written homotopically, and in particular we use the
phrase ``commutative ring spectrum'' to mean an $E_\infty$-ring
spectrum.

For expedience, we will use the same name for both an abelian group
$A$ and the associated Eilenberg--Mac Lane spectrum, regarding the
category of abelian groups as embedded fully faithfully into the
higher category of spectra. A commutative ring $k$ gives rise to a
commutative ring spectrum.

For a ring spectrum $R$, the unit group
$\GL_1(R) \subset \Omega^\infty R$ is the space of units under the
multiplicative monoidal product, or equivalently the space of
self-equivalences of $R$ as a left $R$-module. If $R$ has a
commutative ring structure we write $\gl_1(R)$ for the associated
spectrum of units \cite{may-quinn-ray-ringspectra,
  abghr-units-orientations}. There is a unit map
$\mb S[\GL_1(R)] \to R$ for the adjunction between unit groups and
spherical group algebras.

For a monoidal $\infty$-category $\cC$, the Picard space
$\Pic(\cC) \subset \cC^\simeq$ is the space of invertible objects and
equivalences between them \cite{clausen-jhomomorphism,
  mathew-stojanoska-pictmf}.\footnote{(cf. \cite[\S
  2.2]{mathew-galois}) If the unit of $\cC$ is $\kappa$-compact for
  some cardinal $\kappa$, then the objects of $\Pic(\cC)$ are also
  $\kappa$-compact, and if $\cC$ is presentable then the full
  subcategory of $\kappa$-compact objects is essentially
  small. Therefore, for presentable monoidal $\infty$-categories it is
  possible to identify $\Pic(\cC)$ with a small space even though
  $\cC$ is large.} If $\cC$ has a symmetric monoidal structure then
$\Pic(\cC)$ has an $E_\infty$-structure and we write $\pic(\cC)$ for
the associated Picard spectrum. If $\cC = \LMod_R$ is the category of
modules over $R$, we simply write $\Pic(R)$ and $\pic(R)$ instead.

We will require known identifications of the groups $[A,\Sigma^k B]$
of homotopy classes of maps between Eilenberg--Mac Lane spectra, which
we will simply state
\cite{eilenberg-maclane-emspaces-computation}.\footnote{All of these
  can be determined by first calculating that the groups of maps
  $\mb Z \to \Sigma^k \mb Z$ are $\mb Z, 0, 0, \mb Z/2, 0$ for
  $k = 0\dots 5$, and then using free resolutions of the source and
  target. The generator in degree $3$ is the composite of mod-$2$
  reduction, the Steenrod square $\Sq^2$, and the Bockstein.}
\begin{enumerate}
\item The group $[A,B]$ is isomorphic to the group $\Hom(A,B)$.
\item The group $[A,\Sigma B]$ is isomorphic to the group $\Ext(A,B)$:
  this extension is identified with the fiber of a map $A \to \Sigma B$.
\item The group $[A,\Sigma^2 B]$ is isomorphic to the group
  $\Hom(A, B[2])$ of $2$-torsion homomorphisms $A \to B$.
\item The group $[A,\Sigma^3 B]$ is part of a short exact sequence
  \[
    0 \to \Ext(A, B[2]) \to [A,\Sigma^3 B] \to \Hom(A, B/2) \to 0.
  \]
\item These identifications respect composition. In particular, the
  composition map
  $[B,\Sigma^2 C] \times [A, \Sigma B] \to [A, \Sigma^3 C]$ is the
  Yoneda pairing
  \[
    \Hom(B,C[2]) \times \Ext(A,B) \to \Ext(A,C[2]).
  \]
  As a result, a pair $(g,\Gamma)$ representing a homomorphism
  $B \to C[2]$ and an extension $0 \to B \to \Gamma \to A \to 0$ maps
  to zero if and only if the homomorphism $g$ extends from $B$ to all
  of $\Gamma$.
\end{enumerate}

Given an ordinary symmetric monoidal category $\dD$, the Picard space
$\Pic(\dD)$ is the nerve of an ordinary groupoid, consisting of the
invertible objects and isomorphisms between them. This means there is
a fiber sequence
\[
  \pic(\dD) \to \pi_0 \pic(\dD) \too{k} \Sigma^2 \pi_1
  \pic(\dD).
\]
Here $\pi_0 \pic(\dD)$ is the classical Picard group of the category
$\dD$, and $\pi_1 \pic(\dD)$ is the automorphism group
$\Aut_{\dD}(\mb I)$ of the monoidal unit.  This first $k$-invariant is
always expressed in terms of the twist map. Namely, given an
invertible module $\gamma$, the twist-self-isomorphism
$\gamma \otimes \gamma \to \gamma \otimes \gamma$ is multiplication by
a 2-torsion automorphism $\tau(\gamma)$ of the monoidal unit, an
element of $\pi_1 \pic(\dD)$ that satisfies
$\tau(\gamma) \circ \tau(\gamma) = {\rm id}$. The $k$-invariant
$\pi_0 \pic(\dD) \too{k} \Sigma^2 \pi_1 \pic(\dD)$ is identified with
$\tau$.

\subsection*{Acknowledgements}

The author would like to thank
Tobias Barthel,
Clark Barwick,
Sanath Devalapurkar,
Fabien Hebestreit,
Lars Hesselholt,
and
Charles Rezk
for discussions related to this paper, and
Benjamin Antieau
for comments on an earlier draft. The author would also like to thank
the Max Planck Institute for their hospitality and financial support
while this paper was being written. The author was partially supported
by NSF grant 1610408.

\section{Formal radicals}

Let $\alpha \in \pi_0(R)$ be a unit; $\alpha$ is then also represented
by an element of $\pi_0(\gl_1(R))$. Fix a positive integer $n$. From
this, we can construct an extension of abelian groups
\[
  0 \to \pi_0 (\gl_1 R) \to E \to \mb Z/n \to 0
\]
such that the generator $1$ of $\mb Z/n$ has a chosen lift to an
element $x \in E$ with $x^n = \alpha$. As a set with an action of
$\pi_0 \gl_1(R)$,
$E \cong \coprod_{i=0}^{n-1} \pi_0 \gl_1(R) \cdot \{x^i\}$.

This extension is determined by an extension class
$\Ext^1(\mb Z/n, \pi_0 \gl_1(R))$, or equivalently by a map
\[
  \bar\rho\co \mb Z/n \to \Sigma \pi_0 \gl_1(R)
\]
between Eilenberg--Mac Lane spectra.

\begin{defn}
  A \emph{formal $n$'th root of $\alpha$} is a lift $\rho$
  of $\bar\rho$ to $\gl_1(R)$:
  \[
    \xymatrix{
      & \Sigma \gl_1(R) \ar[d] \\
      \mb Z/n \ar[ur]^{\rho} \ar[r]_-{\bar\rho} &
      \Sigma \pi_0 \gl_1(R)
    }
  \]
  We refer to the fiber of $\rho$ as the \emph{extended
    unit spectrum} $\gl_1(R,\rho)$ associated to $\rho$, and the
  associated infinite loop space as \emph{extended unit group}
  $GL_1(R,\rho)$.
\end{defn}

The extended unit spectrum is part of a fiber sequence
\[
  \gl_1(R) \to \gl_1(R,\rho) \to \mb Z/n,
\]
and hence we get a decomposition
$\GL_1(R,\rho) \cong \coprod_{i=0}^{n-1} \GL_1(R) \cdot \{x^i\}$ as
spaces with an action of $\GL_1(R)$.

\begin{rmk}
  The map $\rho$ determines $\bar\rho$: the map $\gl_1(R) \to
  \gl_1(R,\rho)$ is an isomorphism on $\pi_k$ except when $k=0$, when
  it is the inclusion $\pi_0 \gl_1(R) \to E$. Therefore, $\rho$
  determines the extension $E$ and hence determines $\alpha$ up to
  $n$'th powers.
\end{rmk}

\begin{defn}
  Suppose that $\rho$ is a formal $n$'th root of $\alpha$. Then
  the algebra obtained by \emph{adjoining this root} is the relative
  smash product
  \[
    R[\rho] = \mb S[\GL_1(R,\rho)] \mathop\otimes_{\mb S[\GL_1(R)]} R.
  \]
\end{defn}

\begin{prop}
  The coefficient ring of $R[\rho]$ is
  \[
    \pi_* R[\rho] \cong \pi_* R[x] / (x^n - \alpha).
  \]
\end{prop}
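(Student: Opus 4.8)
The plan is to compute the homotopy groups of $R[\rho]$ directly from its definition as a relative smash product, using the decomposition of $GL_1(R,\rho)$ as a space with $GL_1(R)$-action. First I would observe that $\mb S[GL_1(R,\rho)]$ is a free left $\mb S[GL_1(R)]$-module. Indeed, the fiber sequence of spectra $\gl_1(R) \to \gl_1(R,\rho) \to \mb Z/n$ delooping $\rho$ gives, on infinite loop spaces, a principal fibration exhibiting $GL_1(R,\rho)$ as a $GL_1(R)$-torsor over $\mb Z/n$; concretely the space splits (noncanonically, but compatibly with the $GL_1(R)$-action) as $\coprod_{i=0}^{n-1} GL_1(R)\cdot\{x^i\}$, where $x^i$ are chosen coset representatives lifting the elements of $\mb Z/n$. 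Taking suspension spectra of disjoint unions turns coproducts into wedges, so $\mb S[GL_1(R,\rho)] \simeq \bigvee_{i=0}^{n-1} \mb S[GL_1(R)]\cdot\{x^i\}$ as a left $\mb S[GL_1(R)]$-module, i.e. it is free of rank $n$ on the classes $x^0,\dots,x^{n-1}$.

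Next I would base change along the unit map $\mb S[GL_1(R)] \to R$: since $\mb S[GL_1(R,\rho)]$ is free as a left module, the relative smash product satisfies
\[
  R[\rho] = \mb S[GL_1(R,\rho)] \otimes_{\mb S[GL_1(R)]} R \simeq \bigvee_{i=0}^{n-1} R\cdot\{x^i\},
\]
a free left $R$-module of rank $n$. In particular $\pi_* R[\rho]$ is free of rank $n$ over $\pi_* R$ on generators $1 = x^0, x, \dots, x^{n-1}$. It remains to identify the ring structure, and for that the only thing to check is the relation $x^n = \alpha$ in $\pi_0 R[\rho]$ — once that is known, the $\pi_* R$-algebra map $\pi_* R[x]/(x^n-\alpha) \to \pi_* R[\rho]$ sending $x\mapsto x$ is a map of free $\pi_* R$-modules of rank $n$ sending a basis to a basis, hence an isomorphism.

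To pin down $x^n$, I would trace through where the multiplication comes from. The element $x\in \pi_0 R[\rho]$ is the image of the chosen lift $x\in E = \pi_0 \gl_1(R,\rho)$ under $\gl_1(R,\rho) \to R[\rho]$; since $\rho$ is a map of $E_\infty$-objects (a lift in the category of spectra, equivalently of infinite loop spaces), this composite is multiplicative, so the $n$-fold product $x^n$ in $\pi_0 R[\rho]$ is the image of $x^n \in E$. But $x\in E$ was chosen precisely so that $x^n = \alpha$ in $E$, where $\alpha$ sits in the subgroup $\pi_0 \gl_1(R)$; and $\alpha \in \pi_0\gl_1(R)$ maps to $\alpha \in \pi_0 R$ under the standard unit-group inclusion, whose composite with $\gl_1(R)\to\gl_1(R,\rho)\to R[\rho]$ agrees with $\pi_0 R \to \pi_0 R[\rho]$. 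Hence $x^n = \alpha$ in $\pi_0 R[\rho]$, as desired.

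The main obstacle is the bookkeeping in this last paragraph: making sure that the multiplicative structure on $R[\rho]$ really does compute $x^n$ as the image of $x^n\in E$, which requires that $\rho$ be a map of $E_\infty$-objects rather than merely of spectra, and requires the compatibility of the various unit maps $\mb S[GL_1(-)]\to(-)$ with the base change defining $R[\rho]$. The freeness statement is essentially formal once one knows that suspension-spectrum-of-a-torsor is free; the identification of the ring structure is where one must be careful that no twisting by the extension class creeps into the product of basis elements beyond the single relation $x^n=\alpha$.
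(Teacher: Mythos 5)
Your proposal is correct and takes essentially the same approach as the paper: decompose $\mb S[\GL_1(R,\rho)]$ as a free left $\mb S[\GL_1(R)]$-module on the $x^i$, base change to $R$ to get freeness of $\pi_* R[\rho]$, and then read off the multiplication from the identity $x^n = \alpha$ in $E = \pi_0 \GL_1(R,\rho)$. The paper's proof is terser at the final step (it simply asserts that the identity $x^n=\alpha$ determines the ring structure), whereas you spell out why the map $\GL_1(R,\rho) \to \Omega^\infty R[\rho]$ is multiplicative and hence carries the relation in $E$ to the relation in $\pi_0 R[\rho]$; this is a useful clarification, not a different argument.
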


\begin{proof}
  The decomposition
  $\GL_1(R,\rho) \cong \coprod_{i=0}^{n-1} \GL_1(R) \cdot \{x^i\}$
  means that the spherical group algebra $\mb S[\GL_1(R,\rho)]$
  decomposes as $\oplus_{i=0}^{n-1} \mb S[\GL_1(R)] \cdot \{x^i\}$, a
  free left $\mb S[\GL_1(R)]$-module. Therefore, there is a simple
  K\"unneth formula that gives us an isomorphism of modules:
  \begin{align*}
    \pi_* R[\rho] &\cong \pi_* \mb S[\GL_1(R,\rho)]
                    \mathop\otimes_{\pi_* \mb S[\GL_1(R)]} \pi_* R \\
    &\cong \bigoplus_{i=0}^{n-1} \pi_* R \cdot \{x^i\}.
  \end{align*}
  Moreover, the identity $x^n = \alpha$ for the element
  $x \in E = \pi_0 \GL_1(R,\rho)$ completely determines the
  multiplication in $\pi_* R[\rho]$.
\end{proof}

\begin{rmk}
  It is clear that, other than the calculation of the structure of the
  coefficient ring, there is nothing special about the group $\mb Z/n$ in
  the above discussion. Given a map $\rho\co A \to \Sigma
  \gl_1(R)$ for some abelian group $A$, lifting an extension
  \[
    0 \to \pi_0 \gl_1(R) \to E \to A \to 0,
  \]
  there is an associated algebra $R[\rho]$ whose coefficient ring is a
  twisted central extension:
  \begin{align*}
    \pi_* R[\rho] &\cong \mb Z[E] \mathop\otimes_{\mb Z[\pi_0
                    \gl_1(R)]} \pi_* R\\
    &\cong \bigoplus_{a \in A} \pi_* R \cdot \{a\}.
  \end{align*}
  We will see similar algebras in later sections.
\end{rmk}

\begin{exam}
  Suppose that $n$ is a unit in $\pi_0 R$. Then $n$ acts invertibly on
  the homotopy groups $\pi_k \gl_1(R) \cong \pi_k R$ for $k > 0$:
  therefore the spectrum of maps from $\mb Z/n$ to the $0$-connected
  cover $\tau_{\geq 1} \gl_1(R)$ is trivial. Using the fiber sequence
  \[
    \Sigma \gl_1(R) \to \Sigma \pi_0 \gl_1(R) \to \Sigma^2 \tau_{\geq
      1} \gl_1(R),
  \]
  we find that, for any unit $\alpha$, the map $\mb Z/n \to \Sigma
  \pi_0 \gl_1(R)$ lifts essentially uniquely to a formal $n$'th
  root $\mb Z/n \to \Sigma \gl_1(R)$.
\end{exam}

\begin{exam}
  Let $K$ be the $p$-complete $K$-theory spectrum. Then it is possible
  to show that the group $[\mb Z/p, \Sigma \gl_1 K]$ is trivial, and
  thus this method does not allow us to adjoin $x = \sqrt[p]{\alpha}$
  for any nontrivial element $\alpha$ of
  $\mb Z_p^\times / (\mb Z_p^\times)^p$. To give a proof, however, we
  need to know structural properties of the multiplication on $K$. The
  straight-line proof we know uses Rezk's $K(1)$-local logarithm
  $\ell_1\co \gl_1 K \to K$ \cite{rezk-logarithmic}, together with
  nontrivial knowledge of low-degree $k$-invariants for $\gl_1 K$. In
  rough, the fact that Rezk's logarithm gives us an equivalence in
  degrees greater than $2$ implies that there is a diagram of fiber
  sequences:
  \[
    \xymatrix{
      \Sigma^4 (ku)^\wedge_p \ar[r] &
      \tau_{\geq 2} \gl_1 K \ar[r] \ar[d]&
      \gl_1 K \ar[d] \\
      & \Sigma^2 \mb Z_p \ar@{.>}[ul]
      & \mb Z_p^\times \ar@{.>}[ul]
    }
  \]
  Applying $[\mb Z/p, -]$ gives us a spectral sequence that computes
  $[\mb Z/p, \Sigma \gl_1 K]$; in the critical group the
  $k$-invariants of $\gl_1 K$ give this spectral sequence one
  nontrivial differential for $p > 2$ and two nontrivial differentials
  for $p=2$.

  However, the impossibility of adjoining these radicals can be shown
  directly using $K(1)$-local power operations, in a manner exactly
  analogous to the proof that one cannot adjoin roots of unity; in
  this form it generalizes. Let us sketch this argument.

  If we had such an $E_\infty$-ring $K$-algebra $L$, it would be
  $p$-complete and thus $K(1)$-local. Its coefficient ring
  $K_*[x] / (x^p - \alpha)$ would then have a $K(1)$-local power
  operation $\psi^p$, a ring endomorphism that agrees with the
  Frobenius mod $p$ \cite{hopkins-k1-local}. The element
  $\zeta = \psi^p (x) / x$ would then be a $p$'th root of unity. If
  $\zeta$ is not in $\mb Z_p^\times$, then $L$ is a $K(1)$-local
  $E_\infty$-ring containing a nontrivial $p$'th root of unity and
  Devalapurkar has shown this to be impossible
  \cite{devalapurkar-roots}. If $\zeta$ is in $\mb Z_p^\times$, then
  $\alpha \equiv \psi^p x = \zeta^{-1} x$ mod $p$, which contradicts
  the fact that $1, x, \dots, x^{p-1}$ are a basis of this ring mod
  $p$.
\end{exam}

\section{Strict units}

One source of formal roots is the theory of strictly commutative
elements.

\begin{defn}
  The space $\mb G_m(R)$ of \emph{strictly commutative units} of $R$
  is the space of maps $\mb Z \to \gl_1(R)$, or equivalently the space
  of $E_\infty$-maps $\mb Z \to \GL_1(R)$. The generator $1 \in \mb Z$
  induces forgetful maps $\mb G_m(R) \to \GL_1(R) \to \pi_0 \gl_1(R)$.
\end{defn}

In particular, a strictly commutative unit of $R$ has an underlying
unit in $\pi_0(R)$.

\begin{prop}
  Suppose $\alpha$ is a strictly commutative unit of $R$. Then, for
  any $n > 0$, $\alpha$ has a canonical lift to a formal $n$'th
  root.
\end{prop}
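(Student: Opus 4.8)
The plan is to produce the formal $n$'th root not as an extra piece of data but by exploiting the functoriality of everything in sight: a strictly commutative unit is a map of spectra $\alpha\co \mb Z \to \gl_1(R)$, and the extension class $\bar\rho\co \mb Z/n \to \Sigma\pi_0\gl_1(R)$ that we want to lift should simply be the pushforward along $\alpha$ of a \emph{universal} extension class living over $\mb Z$. So first I would identify the right universal object: the multiplication-by-$n$ map $\mb Z \too{n} \mb Z$ has cofiber $\mb Z/n$, giving a canonical map $\delta\co \mb Z/n \to \Sigma\mb Z$, i.e. a canonical class in $\Ext^1(\mb Z/n,\mb Z)$ representing the extension $0 \to \mb Z \too{n} \mb Z \to \mb Z/n \to 0$.

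Next I would form the composite $\Sigma\alpha \circ \delta\co \mb Z/n \to \Sigma\mb Z \to \Sigma\gl_1(R)$ and check that its composite down to $\Sigma\pi_0\gl_1(R)$ is precisely $\bar\rho$. This is a diagram chase: the postcomposition $\mb Z \too{\alpha} \gl_1(R) \to \pi_0\gl_1(R)$ sends $1$ to the underlying unit $\alpha \in \pi_0\gl_1(R)$, and pushing the extension $0 \to \mb Z \too{n} \mb Z \to \mb Z/n \to 0$ along this map of abelian groups produces exactly the extension $0 \to \pi_0\gl_1(R) \to E \to \mb Z/n \to 0$ in which the generator of $\mb Z/n$ lifts to an element $x$ with $x^n = \alpha$, because $x$ is the image of $1 \in \mb Z$ (the middle term) and $x^n$ is the image of $n \in \mb Z$, which maps to $\alpha$. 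Unwinding the dictionary in item (2) of the conventions — that $[A,\Sigma B] \cong \Ext(A,B)$ with the extension realized as a fiber sequence — this says exactly that $\Sigma\alpha\circ\delta$ lifts $\bar\rho$, so we may set $\rho = \Sigma\alpha \circ \delta$.

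Finally I would record why this lift deserves to be called \emph{canonical}: it depends only on the strictly commutative unit structure on $\alpha$ (the map $\mb Z \to \gl_1(R)$) and on the integer $n$, via the natural transformation $\delta$ which is itself canonical. I would also note the compatibility one expects — that the underlying unit of $\rho$ is $\alpha$, matching the Remark after the definition of formal roots, and that for varying $n \mid m$ the roots are compatible under the evident maps $\mb Z/m \to \mb Z/n$ dual to the tower of cofiber sequences.

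The step I expect to be the main obstacle is not any single hard computation but getting the identification in the second paragraph stated cleanly: one must be careful that ``pushforward of the extension class along $\alpha$'' on the level of spectra (composition with $\Sigma\alpha$) really does correspond to ``pushout of the short exact sequence along $\alpha$ on $\pi_0$'' under the identification of $[\mb Z/n,\Sigma\pi_0\gl_1(R)]$ with $\Ext^1$, and that the resulting $E$ is the one with $x^n = \alpha$ rather than some twist. This is bookkeeping with signs and orientation conventions for the connecting map of $\mb Z \too{n}\mb Z$, but it is where all the content sits; once it is pinned down, the rest is formal.
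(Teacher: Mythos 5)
Your construction is exactly the paper's: compose the Bockstein map $\mb Z/n \to \Sigma\mb Z$ (coming from the cofiber sequence $\mb Z \too{n} \mb Z \to \mb Z/n$) with $\Sigma\alpha\co \Sigma\mb Z \to \Sigma\gl_1(R)$. The paper states this in one line, while you additionally spell out the verification that the composite lifts $\bar\rho$; this is the same argument, just with the bookkeeping written out.
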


\begin{proof}
  The canonical Bockstein map $\mb Z/n \to \Sigma \mb Z$
  can be composed with the map $\mb Z \to \gl_1(R)$ classifying
  $\alpha$.
\end{proof}

\begin{exam}
  Let $R = S^{-1} (\tau_{\leq 1} \mb S)$ be the localization of
  the first Postnikov truncation of the sphere spectrum with respect
  to some set $S$ of primes (not containing $2$). Then there
  is a fiber sequence
  \[
    \gl_1(R) \to (S^{-1} \mb Z)^\times \too{k} \Sigma^2 \mb Z/2.
  \]
  This $k$-invariant corresponds to a (2-torsion) homomorphism $(S^{-1}
  \mb Z)^\times \to \mb Z/2$. This homomorphism is a classical
  calculation of orientation theory: it is the map
  \[
    n \mapsto \begin{cases}
      1 &\text{if }n \equiv +1 \mod 4,\\
      {-1} &\text{if }n \equiv -1 \mod 4.
    \end{cases}
  \]
  As a result, one can determine the homotopy groups of the space of
  strictly commuting elements, and in particular there is an exact
  sequence
  \[
    0 \to [\mb Z, \gl_1(R)] \to [\mb Z, (S^{-1} \mb Z)^\times] \to
    [\mb Z, \Sigma^2 \mb Z/2].
  \]
  We find that any unit in $S^{-1} \mb Z$ which is congruent to $1$
  mod $4$ lifts, essentially uniquely, to a strictly commutative unit
  of $R$. This allows us to construct commutative algebras such as
  $R[\sqrt{5}]$ and $R[\sqrt{-3}]$, even though these are ramified
  extensions on the level of coefficient rings.
\end{exam}

\begin{rmk}
  In the case of strictly commutative units, we obtain a second
  description of the algebra obtained by adjoining this root $\rho$. A
  strictly commutative element determines a composite map
  \[
    \mb S[\mb Z] \to \mb S[\GL_1(R)] \to R,
  \]
  and so we can construct the algebra $R[\rho]$ as $R \otimes_{\mb
    S[\mb Z]} \mb S[\tfrac{1}{n} \mb Z]$.

  This has the benefit that it readily lifts to a \emph{nonunit}
  version. If we define the \emph{strictly commutative multiplicative
    monoid} $\mb M_m(R)$ to be the space of $E_\infty$-maps
  \[
    \mb N \to M_1(R) = \Omega^\infty_{\otimes} R
  \]
  to the multiplicative monoid of $R$, then a strictly commutative
  element $\alpha$ can have an $n$'th root adjoined via the
  construction
  \[
    \mb S[\tfrac{1}{n} \mb N] \otimes_{\mb S[\mb N]} R.
  \]
\end{rmk}

\begin{rmk}
  A further property possessed by strict units is that it is possible
  to trivialize them. Using the natural augmentation $\mb S[\mb Z] \to
  \mb S$ of the spherical group algebra, any strictly commutative unit
  of $R$ with underlying unit $x \in \pi_0(R)$ gives rise to a new
  commutative ring spectrum
  \[
    R / (x-1) = \mb S \otimes_{\mb S[\mb Z]} R,
  \]
  whose underlying $R$-module is equivalent to the cofiber of the map
  $R \to R$ induced by $(x-1) \in \pi_0(R)$. This algebra has the
  following property: it is the universal commutative $R$-algebra with
  a chosen commuting diagram
  \[
    \xymatrix{
      \mb Z \ar[r] \ar[d] & \gl_1(R) \ar[d] \\
      \ast \ar[r] & \gl_1(R/(x-1)).
    }
  \]
  We will see similarly themed universal properties in later sections.
\end{rmk}

\section{Strict gradings}

The shift-by-1 in our definition of formal roots is strongly
suggestive: the suspended unit spectrum $\Sigma \gl_1(R)$ is a
connective cover of the Picard spectrum $\pic(R)$. In this section we
will begin exploring Picard-graded analogues of our constructions.

\begin{defn}
  Suppose that $A$ is an abelian group and $\cC$ is a symmetric
  monoidal $\infty$-category. The space of \emph{strict $A$-gradings
    for $\cC$} is the space of maps $A \to \pic(\cC)$, or equivalently
  the space of $E_\infty$-maps $A \to \Pic(\cC)$. There is a composite
  $A \to \pi_0 \pic(\cC)$, which we refer to as the \emph{underlying
    $A$-grading}.
\end{defn}

\begin{rmk}
  Suppose that we have any symmetric monoidal functor $A \to \cC$.
  The space $A$ could be regarded as a discrete groupoid, so its image
  lies in $\cC^\simeq$; the objects in $A$ have inverses under the
  monoidal product, so monoidality of $\rho$ implies that its image
  lies inside $\Pic(\cC)$. We will not distinguish between symmetric
  monoidal functors $A \to \cC$ and symmetric monoidal functors
  $A \to \Pic(\cC)$.
\end{rmk}

\begin{exam}
  Let $\cC$ be a symmetric monoidal $\infty$-category. The
  \emph{strict Picard space} of $\cC$, denoted by $\ZPic(\cC)$, is the
  space of strict $\mb Z$-gradings: maps $\mb Z \to \pic(\cC)$, or
  equivalently $E_\infty$-maps $\mb Z \to \Pic(\cC)$. The generator
  $1 \in \mb Z$ induces forgetful maps
  $\ZPic(\cC) \to \Pic(\cC) \to \pi_0 \Pic(\cC)$, and we refer to the
  image as the \emph{underlying object}.
\end{exam}

\begin{exam}
  The space of \emph{strict $n$-torsion objects} of $\cC$, denoted by
  $\ZPic^{[n]}(\cC)$, is the space of strict $\mb Z/n$-gradings: maps
  $\rho\co \mb Z/n \to \pic(\cC)$, or equivalently maps
  $\mb Z/n \to \Pic(\cC)$ of $E_\infty$-spaces. The cofiber sequence
  $\mb Z \too{n} \mb Z \to \mb Z/n$ of spectra gives rise to the
  following maps, where each double composite is a fiber sequence:
  \[
    \mu_n(R) \to \mb G_m(R) \too{n} \mb G_m(R) \too{\partial}
    \ZPic^{[n]}(R) \to \ZPic(R) \too{n} \ZPic(R)
  \]
  In particular, the map $\partial$ sends a strictly commutative element
  $\alpha\co \mb Z \to \gl_1(R)$ to the image
  \[
    \mb Z/n \to \Sigma \mb Z \too{\alpha} \Sigma \gl_1(R) \to \pic(R)
  \]
  of the formal $n$'th root associated to $\alpha$.
\end{exam}

\begin{rmk}
  For a commutative ring spectrum $R$, a strict $n$-torsion $R$-module
  with underlying left $R$-module $L$ has a choice of equivalence
  $L^{\otimes_R n} \to R$. If the module $L$ is equivalent to
  $R$, then the map $\mb Z/n \to \pi_0 \pic(R)$ is trivial and so the
  map lifts to a map $\mb Z/n \to \Sigma \gl_1(R)$: a formal
  $n$'th root. We can detect which root (up to $n$'th powers) by
  making a choice of an equivalence $R \to L$; this determines a
  composite equivalence
  $R \simeq R^{\otimes_R n} \to I^{\otimes_R n} \to R$, and hence a
  unit in $\pi_0(R)$.
\end{rmk}

\begin{exam}
  Suppose that $\cC$ is a symmetric monoidal stable $\infty$-category
  such that $n$ is a unit in the ring $\pi_0 \End_{\cC}(\mb I)$. Then
  for $k > 0$ there are isomorphisms
  \[
    \pi_k \pic(\cC) \cong \pi_{k+1} \Aut_{\cC}(\mb I) \cong
    \pi_{k+1} \End_{\cC}(\mb I),
  \]
  and the latter are acted on invertibly by $n$. Therefore, the fiber
  sequence
  \[
    \tau_{\geq 2} \pic(\cC) \to \pic(\cC) \to \pic(h\cC)
  \]
  induces an equivalence
  $\Map(\mb Z/n, \pic(\cC)) \to \Map(\mb Z/n, \pic(h\cC))$. In this
  case, strict $n$-torsion objects in $\cC$ are equivalent to strict
  $n$-torsion objects in the homotopy category $h\cC$.
\end{exam}

\begin{exam}
  For a local ring $k$, there is a fiber sequence
  \[
    \pic(k) \to \mb Z \to \Sigma^2 k^\times,
  \]
  where $\Sigma k$ is a generator of $\pi_0 \pic(k)$. The
  $k$-invariant is the twist permutation of $\Sigma k$, and is
  represented by the homomorphism
  $\mb Z \twoheadrightarrow \{\pm 1\} \to k^\times$. This
  $k$-invariant becomes trivial on $2\mb Z$, and so the category of
  $k$-modules has a strict $2\mb Z$-grading. The spaces $\mb G_m(k)$
  are connected but not contractible, so the $2\mb Z$-gradings are
  unique up to isomorphism but not canonical. We can make them
  canonical by choosing a $2\mb Z$-grading of $\mb Z$.\footnote{If $2
    = 0$ in $k$, this $k$-invariant is trivial and so the category of
  $k$-modules has a strict $\mb Z$-grading. Moreover, $\pic(\mb F_2)
  \simeq \mb Z$, and so this $\mb Z$-grading is canonical.}
\end{exam}

\begin{exam}[{\cite[1.3.7]{secondary}}]
  For any commutative ring spectrum $R$, the element $\Sigma R$ in
  $\Pic(R)$ determines a map
  $\mb Z = \pi_0 \pic(\mb S) \to \pi_0 \pic(R)$. For any $d > 0$, we
  get a composite $d\mb Z \to \pi_0 \pic(R)$. If this lifts to a
  strict $d\mb Z$-grading, we could call this an
  $E_\infty^d$-structure on $R$: it should be a strengthening of
  the notion of an $H_\infty^d$-structure from
  \cite{bmms-hinfty}. However, our enthusiasm for extending this
  notational convention is very low.

  The universal property of the real bordism spectrum $MO$ is that it
  is initial among commutative ring spectra with a nullhomotopy of the
  map $BO \to \Pic(\mb S) \to \Pic(MO)$ of $E_\infty$-spaces.
  Equivalently, it is initial among commutative rings with a
  commutative diagram of spectra
  \[
    \xymatrix{
      ko \ar[r] \ar[d] & \pic(\mb S) \ar[d] \\
      \mb Z \ar[r] & \pic(MO).
    }
  \]
  In particular, this gives the real bordism spectrum $MO$ a strict
  $\mb Z$-grading and any commutative $MO$-algebra inherits it.
  Similar considerations with complex or spin structures structures
  give the complex bordism spectrum $MU$ a strict $2\mb Z$-grading and
  the spin bordism spectrum $MSpin$ a strict $4\mb Z$-grading.
\end{exam}

\begin{exam}
  The Atiyah--Bott--Shapiro orientation lifts to give the complex
  $K$-theory spectrum $ku$ a commutative $MU$-algebra structure,
  and the real $K$-theory spectrum $ko$ a commutative
  $MSpin$-algebra structure, due to work of Joachim
  \cite{joachim-abs}. Therefore, $ku$ admits a strict $2\mb
  Z$-grading and $ko$ admits a strict $4\mb Z$-grading.
\end{exam}

\begin{exam}
  Let $K\mb C$ be the algebraic $K$-theory spectrum of the complex
  numbers, which comes equipped with a map $f\co K\mb C \to ku$ to the
  complex topological $K$-theory spectrum. Work of Suslin showed that
  the map $f$ is an equivalence after profinite completion, and hence
  the fiber of $f$ is rational. We would like to show that $K\mb C$
  has a strict $2\mb Z$-grading. (A similar argument applies to show
  that the strict $4\mb Z$-grading of the real $K$-theory spectrum
  $ko$ lifts to the algebraic $K$-theory $K\mb R$.)

  Consider the arithmetic square:
  \[
    \xymatrix{
      K\mb C \ar[r] \ar[d] & K\mb C_{\mb Q} \ar[d] \\
      K\mb C^\wedge \ar[r] & (K\mb C^\wedge)_{\mb Q}
    }
  \]
  The functor $GL_1$ preserves this pullback. When we apply $\pic$ we
  get a diagram of connective spectra:
  \[
    \xymatrix{
      \pic(K\mb C) \ar[r] \ar[d] & \pic (K\mb C_{\mb Q})\ar[d]\\
      \pic(K\mb C^\wedge) \ar[r] & \pic((K\mb C^\wedge)_{\mb Q})
    }
  \]
  On $\pi_0$ this is the constant square $\mb Z$, and on $\pi_1$ we get
  a bicartesian square
  \[
    \xymatrix{
      \mb Z^\times \ar[r] \ar[d] & \mb Q^\times \ar[d] \\
      \widehat{\mb Z}^\times \ar[r] & (\widehat{\mb Z}_{\mb Q})^\times.
    }
  \]
  Together these show that the diagram of Picard spectra is
  a homotopy pullback diagram. Let $C$ be the cofiber of
  $\pic(K\mb C) \to \pic(K\mb C^\wedge)$; its homotopy groups are then
  rational above degree $2$, and equal to the torsion-free group
  $\widehat{\mb Z}^\times / \mb Z^\times$ in degree one.

  The obstruction to lifting the strict $2\mb Z$-grading
  \[
    2\mb Z \to \pic(ku) \to \pic(ku^\wedge) \simeq \pic(K\mb C^\wedge)
  \]
  to a $2\mb Z$-grading of $\pic(K\mb C)$ is the map $2\mb Z \to
  C$. However, consider the fiber sequence
  \[
    \tau_{\geq 2} C \to C \to \Sigma \widehat{\mb Z}^\times/\mb
    Z^\times.
  \]
  Since the left term is 1-connected and rational and the right term
  is torsion-free, there are no nontrivial homotopy classes of maps
  from $2\mb Z$ into either term, and hence $[\mb Z,C] =
  0$. Therefore, the strict $2\mb Z$-grading of $ku$ can be extended to
  $K\mb C$.

\end{exam}

\section{Trivializing algebras}

The ring spectrum constructed by adjoining formal radicals turns out
to be a special case of a more general construction associated to
strict gradings. From this point forward we will need to make heavier
use of \cite{lurie-htt, lurie-higheralgebra}.

\begin{defn}
  Suppose that $\cC$ is a presentable symmetric monoidal
  $\infty$-category, $A$ is an abelian group regarded as a discrete
  symmetric monoidal category, and that $\rho\co A \to \cC$ is a
  strict $A$-grading. We define the \emph{trivializing algebra
    $T_\rho$} to be the homotopy colimit of $\rho$.
\end{defn}

\begin{rmk}
  Since $A$ is a discrete category, there is an equivalence in $\cC$
  of the form
  \[
    T_\rho \simeq \coprod_{a \in A} \rho(a).
  \]
\end{rmk}

This homotopy colimit is a very special case of the Thom
construction. As such, work of Antol\'in-Camarena--Barthel gives the
trivializing algebra a universal property, generalizing the results of
\cite{may-quinn-ray-ringspectra, ando-blumberg-gepner-hopkins-rezk}
from the category of spectra.

\begin{prop}
  \label{prop:universalproperty}
  Suppose that $A$ is an abelian group and that $\rho\co A \to \cC$
  is a symmetric monoidal functor with trivializing algebra $T_\rho$.
  \begin{enumerate}
  \item $T_\rho$ has a natural lift to a commutative algebra object:
    $T_\rho \in \CAlg(\cC)$.
  \item The algebra $T_\rho$ is universal among commutative algebras in
    $\CAlg(\cC)$ with a chosen commuting diagram
    \[
      \xymatrix{
        A \ar[r]^-{\rho} \ar[d] & \pic(\cC) \ar[d] \\
        \ast \ar[r] & \pic(\LMod_{T_\rho}).
      }
    \]
    In particular, for any $a \in A$ the algebra $T_\rho$ has a chosen
    equivalence of left $T_\rho$-modules $\phi_a\co T_\rho \to T_\rho \otimes
    \rho(a)$, and there are chosen coherences $\phi_a \otimes 1
    \circ \phi_b \simeq \phi_{ab}$.
  \end{enumerate}
\end{prop}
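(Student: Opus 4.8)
The plan is to realize $T_\rho$ as a Thom object for the map $\rho\colon A \to \pic(\cC)$ in the sense of Antol\'in-Camarena--Barthel, and then quote their universal property. Concretely, a symmetric monoidal functor $A \to \cC$ landing in invertible objects is the same datum as an $E_\infty$-map $A \to \Pic(\cC)$, i.e.\ a map of connective spectra $A \to \pic(\cC)$; since $A$ is an Eilenberg--Mac Lane spectrum this is also the same as a map of spaces $A \to B\GL_1(\mb S) \times \dots$ — but the relevant formulation is that $\rho$ determines a local system of invertible objects on the space $A$, and the \emph{Thom object} of this local system is precisely $\hocolim_A \rho = \coprod_{a\in A}\rho(a) = T_\rho$. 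Thus part (1), the $E_\infty$-structure, is the statement that the Thom object of an $E_\infty$-map of Picard spaces is an $E_\infty$-algebra; this is exactly \cite{antolin-camarena-barthel-thom} (generalizing \cite{ando-blumberg-gepner-hopkins-rezk}), applied to the presentable symmetric monoidal $\infty$-category $\cC$ in place of spectra.

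For part (2), the universal property is again read off from \cite{antolin-camarena-barthel-thom}: the Thom $E_\infty$-algebra of a map $f\colon X \to \Pic(\cC)$ of $E_\infty$-spaces is initial among commutative algebras $B$ equipped with a nullhomotopy of the composite $X \to \Pic(\cC) \to \Pic(\LMod_B)$ (the second map being base change $L \mapsto B\otimes L$). Here $X = A$, $f = \rho$, and $B = T_\rho$; so $T_\rho$ is initial among $B \in \CAlg(\cC)$ with a commuting square as displayed. Unwinding what a nullhomotopy of $A \to \pic(\LMod_{T_\rho})$ means: it is a factorization of $\rho$ through the basepoint after base change, which on the level of objects gives, for each $a\in A$, an equivalence $\phi_a\colon T_\rho \to T_\rho \otimes \rho(a)$ of left $T_\rho$-modules, and the higher coherence data of the nullhomotopy (the fact that $\rho$ is a map of $E_\infty$-spaces and the nullhomotopy is compatible with that structure) supplies the coherences $\phi_a\otimes 1 \circ \phi_b \simeq \phi_{ab}$, together with all higher compatibilities. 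The last sentence of the proposition is therefore just a partial, hands-on translation of the abstract universal property, and requires no separate argument beyond chasing the equivalences $\pi_0\pic(\LMod_{T_\rho}) \supseteq \{$classes $[T_\rho\otimes\rho(a)]\}$ and $\Omega\pic(\LMod_{T_\rho}) \simeq \GL_1(T_\rho)$.

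The main point requiring care — the ``hard part'' — is verifying that the hypotheses of \cite{antolin-camarena-barthel-thom} genuinely apply in our setting: their Thom-object formalism and its universal property are developed for $R$-modules (or for a general presentable symmetric monoidal $\infty$-category), and one must check that a strict $A$-grading $\rho$, which a priori is merely a symmetric monoidal functor $A \to \cC$ from a discrete group, is indeed classified by a map of spaces into the relevant classifying object $B\Pic(\cC)$ (equivalently, that the ``$J$-homomorphism'' picture applies to an arbitrary presentable $\cC$, not just $\LMod_R$). Once the Thom object is identified with the homotopy colimit $\coprod_{a\in A}\rho(a)$ via the projection formula / the fact that $A$ is discrete (so the local system has no higher cells and the Thom object is just the indexed coproduct), everything else is a matter of bookkeeping. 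A secondary subtlety is that $\cC$ is large; but presentability guarantees $\Pic(\cC)$ is (equivalent to) a small space, as recalled in the footnote in the conventions, so the colimit exists and the cited results apply.
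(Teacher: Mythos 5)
Your overall strategy is the same as the paper's: identify $T_\rho$ with a Thom object for the $E_\infty$-map $\rho\co A \to \Pic(\cC)$ and then invoke Antol\'in-Camarena--Barthel, with part~(1) coming directly from their theorem that Thom objects inherit $E_\infty$-structures. That much matches.

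Where your proposal falls short is in part~(2). You assert, as if it were a citable result, that the Thom object of $f\co X \to \Pic(\cC)$ is initial among $B \in \CAlg(\cC)$ with a chosen nullhomotopy of $X \to \Pic(\cC) \to \Pic(\LMod_B)$. But that formulation is, in the cited source, proved only in the $\LMod_R$ setting; what is available in the generality of an arbitrary presentable symmetric monoidal $\cC$ is the weaker, ``lax'' form of the universal property: maps $T_\rho \to R$ in $\CAlg(\cC)$ correspond to \emph{lax symmetric monoidal lifts} of $A \to \cC$ along the forgetful functor $\cC_{/R} \to \cC$. The remaining work --- which you describe as ``bookkeeping'' but which is in fact the crux --- is to show that any such lax lift is forced to land in the subcategory of invertible objects of $\cC_{/R}$, and that an invertible object of $\cC_{/R}$ is exactly a map $L \to R$ whose adjoint $R \otimes L \to R$ is an equivalence (with $L$ invertible in $\cC$). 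This step uses that $A$ is grouplike: since every object of $A$ is $\otimes$-invertible and the lift is symmetric monoidal, its image must consist of invertible objects of $\cC_{/R}$, and the monoidal structure on $\cC_{/R}$ forces the characterization above. Only after this observation does the lax-lift criterion become the commuting-square / nullhomotopy criterion in the statement. You correctly flag that the generality of $\cC$ is the point requiring care, but you never supply this argument, so as written the proof has a genuine gap at exactly the step the paper spends its effort on.
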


\begin{proof}
  The colimit has a commutative algebra structure by \cite[Theorem
  2.8]{antolin-camarena-barthel-thom}.
  
  We will now prove the universal property essentially, following the
  same line of argument in \cite[Lemma
  3.15]{antolin-camarena-barthel-thom}. Applying \cite[Theorem
  2.13]{antolin-camarena-barthel-thom} to the functor $A \to \cC$ of
  symmetric monoidal $\infty$-categories, we find the following: maps
  $T_\rho \to R$ in $\CAlg(\cC)$ are equivalent to lax symmetric
  monoidal lifts in the diagram
  \[
    \xymatrix{
      && \cC_{/R} \ar[d] \\
      A \ar[r] \ar@{.>}[urr] & \Pic(\cC) \ar[r] & \cC.
    }
  \]
  The objects of $\cC_{/R}$ are maps $N \to R$,
  with symmetric monoidal product given by
  \[
    (N \to R) \otimes (M \to R) \simeq (N \otimes M \to R \otimes R
    \to R).
  \]
  The monoidal unit is the map $\mb I \to R$. There is a symmetric
  monoidal functor $\cC_{/R} \to \cC$, given by forgetting the
  structure map, and a symmetric monoidal functor
  $\cC_{/R} \to (\LMod_R)_{/R}$, given by
  $(L \to R) \mapsto (R \otimes L\to R)$.
  
  However, since $A$ is grouplike the image $L \to R$ of any object
  must be contained in the invertible objects of $\cC_{/R}$. This
  implies first that $L$ is an invertible object of $\cC$. This
  implies second that $R \otimes L \to R$ is an invertible object of
  $(\LMod_R)_{/R}$; this happens only when this adjoint structure map
  is an equivalence. Conversely, if $L \to R$ is a map whose adjoint
  $R \otimes L \to R$ is an equivalence, tensoring with $L^{-1}$ gives
  an equivalence of $R$-modules $R \to R \otimes L^{-1}$ of left
  $R$-modules, whose inverse is adjoint to a map $L^{-1} \to R$.
\end{proof}

\begin{rmk}
  Suppose that the functor $\rho\co A \to \cC$ maps entirely to the
  unit component: it is a map $A \to B\Aut_\cC(\mb I)$ of
  $E_\infty$-spaces, classifying an extension $G$ of the automorphism
  space $\Aut_{\cC}(\mb I)$. Then the trivializing algebra $T_\rho$ is
  universal among commutative algebras $R$ in $\cC$ where the map
  $\Aut_{\cC}(\mb I) \to \Aut_{\cC}(R)$ extends to a map of
  $E_\infty$-spaces $G \to \Aut_{\cC}(R)$.

  In particular, when $\cC$ is the category $\Mod_R$ of modules over a
  commutative ring spectrum, this universal property recovers the
  relative tensor product defined earlier.
\end{rmk}

\begin{exam}
  The trivializing algebra for the map $2\mb Z \to \pic(MU)$ is a
  periodic $MU$-spectrum $MUP$, whose coefficient ring is
  $\pi_* MU[u^{\pm 1}]$ for a generator $u$ in degree $2$. It is
  universal among commutative algebras $R$ with a nullhomotopy of the
  composite $ku \to \pic(\mb S) \to \pic(R)$. The algebra $MUP$ is
  often useful for translating between even-periodic and
  $\mb Z/2$-graded interpretations in chromatic theory.
\end{exam}

\section{Root obstructions}
\label{sec:root-obstructions}

The symmetric monoidal functor from $\cC$ to its homotopy category
$h\cC$ induces a map of Picard spectra $\pic(\cC) \to \pic(h\cC)$,
identifying $\pic(h\cC)$ with the first nontrivial stage
$\tau_{\leq 1} \pic(\cC)$ in the Postnikov tower for $\pic(\cC)$. For
us to lift a map $\bar\rho\co A \to \pi_0 \pic(\cC)$ to the first
Postnikov stage $\pic(h\cC)$, it is necessary and sufficient that the
composite map
\[
  A \too{\bar\rho} \pi_0 \pic(\cC) \too{k} \Sigma^2 \pi_1 \pic(\cC)
\]
is trivial. The result is an obstruction class: an element of
$[A, \Sigma^2 \pi_1 \pic(\cC))]$, and a lift exists if and only if
this obstruction vanishes. Two different choices of lift to a map
$A \to \pic(h\cC)$ are represented by homotopy classes of maps
$A \to \Sigma \pi_1 \pic(\cC)$.

The identification of the first $k$-invariant with the twist
homomorphism $\tau$ leads us to the following definition
\cite{rezk-wilkerson}.
\begin{defn}
  An element $\gamma \in \pi_0 \pic(\cC)$ is \emph{symmetric} if
  $\tau(\gamma) = {\rm id}$ in $\pi_1 \pic(\cC)$.
\end{defn}

This allows us to conclude the following.
\begin{prop}
  A map $\bar\rho\co A \to \pi_0 \pic(\cC)$ lifts to a map
  $A \to \pic(h\cC)$ if and only if the elements $\bar\rho(a)$ are
  symmetric for all $a \in A$. Two such lifts determine a difference
  class in $\Ext(A,\pi_1 \pic(h\cC))$.
\end{prop}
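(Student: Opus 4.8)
The plan is to derive both halves of the statement directly from the obstruction-theoretic discussion immediately preceding it, translating everything into algebra by means of the identifications of the groups $[A,\Sigma^k B]$ recorded among the conventions.

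For the first half I would begin from the fact that $\pic(h\cC)=\tau_{\leq 1}\pic(\cC)$, so that $\pi_0\pic(h\cC)=\pi_0\pic(\cC)$, $\pi_1\pic(h\cC)=\pi_1\pic(\cC)$, and there is a fiber sequence
\[
  \Sigma\pi_1\pic(\cC)\to\pic(h\cC)\to\pi_0\pic(\cC)\too{k}\Sigma^2\pi_1\pic(\cC).
\]
A lift of $\bar\rho$ exists exactly when the composite $k\circ\bar\rho\in[A,\Sigma^2\pi_1\pic(\cC)]$ is null. Now I invoke the identification of $[A,\Sigma^2 B]$ with the group $\Hom(A,B[2])$ of $2$-torsion homomorphisms; under the corresponding identification for the source, the $k$-invariant $k$ is the twist homomorphism $\tau\co\pi_0\pic(\cC)\to\pi_1\pic(\cC)$, which is $2$-torsion since $\tau(\gamma)\circ\tau(\gamma)={\rm id}$. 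Because these identifications respect composition --- here the relevant pairing $[\pi_0\pic(\cC),\Sigma^2\pi_1\pic(\cC)]\times[A,\pi_0\pic(\cC)]\to[A,\Sigma^2\pi_1\pic(\cC)]$ is ordinary composition of homomorphisms --- the obstruction $k\circ\bar\rho$ is the homomorphism $a\mapsto\tau(\bar\rho(a))$. A homomorphism is zero precisely when it kills every element, so the obstruction vanishes if and only if $\tau(\bar\rho(a))={\rm id}$ for all $a\in A$, which is exactly the statement that every $\bar\rho(a)$ is symmetric.

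For the second half, assume that a lift exists. Applying $\Map(A,-)$ to the fiber sequence above, the space of lifts of $\bar\rho$ is the fiber of $\Map(A,\pic(h\cC))\to\Map(A,\pi_0\pic(\cC))$ over $\bar\rho$; being nonempty, it is a torsor under the infinite loop space $\Map(A,\Sigma\pi_1\pic(\cC))$, with action induced by the fiber sequence of spectra. Passing to components, the set of homotopy classes of lifts is a torsor over $[A,\Sigma\pi_1\pic(\cC)]$, which is identified with $\Ext(A,\pi_1\pic(\cC))=\Ext(A,\pi_1\pic(h\cC))$. Thus any two lifts of $\bar\rho$ determine a well-defined difference class in $\Ext(A,\pi_1\pic(h\cC))$.

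The step that requires genuine care is the identification of the obstruction class $k\circ\bar\rho$ with the composite homomorphism $\tau\circ\bar\rho$: one must check that composing the spectrum-level $k$-invariant with the map $\bar\rho$ of discrete Eilenberg--Mac Lane spectra is computed, under the algebraic dictionary, by ordinary composition of homomorphisms. This is precisely the assertion that the identifications respect composition, specialized to the pairing above; granting it, the rest is routine bookkeeping with the known computations of maps between Eilenberg--Mac Lane spectra.
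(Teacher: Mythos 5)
Your proof is correct and matches the paper's intent: the paper leaves this proposition unproved, treating it as an immediate repackaging of the obstruction-theoretic discussion in \S\ref{sec:root-obstructions} together with the algebraic identifications of $[A,\Sigma^k B]$ from the conventions, which is exactly what you spell out. Your care in checking that the composite $k\circ\bar\rho$ translates to $\tau\circ\bar\rho$ under these identifications is the right point to flag, and your identification of the lifts as a torsor over $[A,\Sigma\pi_1\pic(\cC)]\cong\Ext(A,\pi_1\pic(h\cC))$ is exactly the paper's observation.
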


\begin{exam}
  Suppose that $L$ is an invertible object such that there is an
  equivalence $v\co \mb I \to L^{\otimes n}$. The object $L$
  determines a map $\mb S \to \pic(\cC)$, and the equivalence $v$
  determines a nullhomotopy of $n$ times this map. Alternatively,
  there is a commutative diagram of symmetric monoidal
  $\infty$-categories
  \[
    \xymatrix{
      F(L^{\otimes n}) \ar[r]  \ar[d] & F(L) \ar[d] \\
      \ast \ar[r] & \Pic(\cC),
    }
  \]
  where $F(x)$ is the free $E_\infty$-space on an object named
  $x$. Taking associated spectra gives a diagram
  \[
    \xymatrix{
      \mb S \ar[r]^n \ar[d] & \mb S \ar[d] \\
      \ast \ar[r] & \pic(\cC),
    }
  \]
  or equivalently a map $\mb S/n \to \pic(\cC)$. Conversely, there is
  a short exact sequence
  \[
    0 \to \pi_1 \pic(\cC) / [\pi_1 \pic(\cC)]^n \to [\mb S/n,
    \pic(\cC)] \to \pi_0 \pic(\cC)[n] \to 0.
  \]
  The quotient expresses that a map from $\mb S/n$ determines an
  underlying $n$-torsion object $L$ in $\pi_0 \pic(\cC)$. The kernel
  expresses that two different maps representing the same object $L$
  may differ in their choice of equivalence
  $v\co \mb I \to L^{\otimes n}$ (modulo self-equivalences of $L$).
  
  If there is a symmetric monoidal functor $f\co \cC \to \dD$ such
  that there is an equivalence $u\co \mb I \to f(L)$ in $\dD$, then
  the map $\mb S \to \pic(\dD)$ determining $f(L)$ becomes
  trivial. However, the extended map $\mb S/n \to \pic(\dD)$ does not
  always become trivial: it becomes trivial precisely when there
  exists a choice of $u\co \mb I \to f(L)$ in $\dD$ such that
  $u^n = f(v)$.

  The bottom homotopy group of $\mb S/n$ is $\mb Z/n$. The map
  $\mb S/n \to \pic(h\cC)$ extends to a map $\mb Z/n \to \pic(h\cC)$
  if and only if $L$ is symmetric, and if so it extends uniquely. A
  strict $\mb Z/n$-grading would be an extension to a map
  $\mb Z/n \to \pic(\cC)$. In rough, we can think of this in the
  following way. If we have a strict $\mb Z/n$-grading, then it is a
  rigid version of choosing an object $L$ with an equivalence
  $v\co \mb I \to \mb L^{\otimes n}$; the trivializing algebra $T$
  then extracts an $n$'th root of this \emph{chosen} equivalence $v$.
\end{exam}

\begin{exam}
  Suppose that $R$ is a commutative ring spectrum which is
  $2n$-periodic: there is a unit $v \in \pi_{2n} R$. This determines a
  symmetric element $\Sigma^2R$, and a lift of $\Sigma^2 R$ to a
  strict $n$-torsion object allows us to construct an $R$-algebra
  whose $\mb Z$-graded coefficient ring is
  \[
    \pi_* R[x] / (x^n - a v)
  \]
  for some well-defined
  $a \in (\pi_0 R)^\times / [(\pi_0 R)^\times]^n$. If $n$ is a unit in
  $\pi_0 R$ then we also find that such algebras can be constructed,
  essentially uniquely, for any value of $a$. These extensions appear,
  for example, when relating completed Johnson--Wilson
  spectra to Lubin--Tate spectra \cite[\S 4]{level3}.
\end{exam}

\section{Grading extensions}

In this section, we will begin the process of extending gradings by
adjoining formal radicals to elements in the Picard group. We fix a
symmetric monoidal presentable $\infty$-category $\cC$, and let
$\Pic(\cC)$ be the Picard space of invertible elements in $\cC$.

\begin{defn}
  Let
  \[
    0 \to \pi_0 \pic(\cC) \to \Gamma \to B \to 0
  \]
  be an extension of abelian groups, represented by a map
  \[
    \bar\rho\co B \to \Sigma \pi_0 \pic(\cC)
  \]
  between Eilenberg--Mac Lane spectra.  A \emph{extension to
    $\Gamma$-grading} is a lift of $\bar\rho$ to $\pic(\cC)$:
  \[
    \xymatrix{
      & \Sigma \pic(\cC) \ar[d] \\
      B \ar[ur]^\rho \ar[r]_-{\bar\rho} & \Sigma \pi_0 \pic(\cC)
    }
  \]
  We refer to the fiber of $\rho$ as the \emph{extended Picard
    spectrum} $\pic(\cC,\rho)$ associated to $\rho$, and the associated
  infinite loop space as the \emph{extended Picard group}
  $\Pic(\cC,\rho)$.
\end{defn}

The extended Picard spectrum is part of a fiber sequence
\[
  \pic(\cC) \to \pic(\cC,\rho) \to B,
\]
and on $\pi_0$ this realizes the extension
$0 \to \pi_0 \pic(\cC) \to \Gamma \to B \to 0$. We get a decomposition
$\Pic(\cC,\rho) \cong \coprod_{b \in B} \Pic(\cC) \cdot \{b\}$ as
spaces with an action of $\Pic(\cC)$.

\begin{rmk}
  Again, the map $\rho$ determines $\bar\rho$ and the extension
  $\Gamma$.
\end{rmk}

By definition, there is an inclusion $i\co \Pic(\cC) \subset \cC$ of
symmetric monoidal $\infty$-categories. The latter is presentable,
whereas the former is (essentially) small. By formally adjoining
colimits to $\cC$, we obtain a factorization
\[
  \Pic(\cC) \to \Pres(\Pic(\cC)) \to \cC
\]
through the presheaf $\infty$-category, where the second functor
preserves colimits.

\begin{prop}
  For a symmetric monoidal $\infty$-category $\cC$, there is a diagram
  \[
    \Pres(\Pic(\cC,\rho)) \leftarrow \Pres(\Pic(\cC)) \to \cC
  \]
  of symmetric monoidal presentable $\infty$-categories.
\end{prop}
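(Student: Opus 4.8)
The plan is to obtain both functors purely formally, from the universal property of presheaf $\infty$-categories equipped with the Day convolution symmetric monoidal structure; the only points requiring care are the smallness of the relevant Picard spaces and the observation that the maps in sight are symmetric monoidal.

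First I would recall the Day convolution machinery \cite{lurie-higheralgebra}: for an essentially small symmetric monoidal $\infty$-category $\dD$, the presheaf $\infty$-category $\Pres(\dD) = \Fun(\dD^{\op},\Spaces)$ carries a presentable symmetric monoidal structure whose tensor product preserves colimits separately in each variable, for which the Yoneda embedding $\dD \to \Pres(\dD)$ is symmetric monoidal, and which enjoys the universal property that restriction along the Yoneda embedding induces an equivalence between the $\infty$-category of colimit-preserving symmetric monoidal functors $\Pres(\dD) \to \mathcal{E}$ and that of symmetric monoidal functors $\dD \to \mathcal{E}$, for every presentable symmetric monoidal $\infty$-category $\mathcal{E}$. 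To apply this I must know that $\Pic(\cC)$ and $\Pic(\cC,\rho)$ are essentially small symmetric monoidal $\infty$-categories. For $\Pic(\cC)$ this is recorded above (cf.\ \cite[\S 2.2]{mathew-galois}); it is a symmetric monoidal $\infty$-groupoid. For $\Pic(\cC,\rho)$, the decomposition $\Pic(\cC,\rho) \cong \coprod_{b \in B}\Pic(\cC)\cdot\{b\}$ exhibits it as a small coproduct of copies of $\Pic(\cC)$, so it too is essentially small and $\Pres(\Pic(\cC,\rho))$ is symmetric monoidal presentable.

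The right-hand functor is the factorization already noted above: the inclusion $i\co \Pic(\cC)\hookrightarrow\cC$ of the full subcategory on the invertible objects is a symmetric monoidal functor, and since $\cC$ is presentable symmetric monoidal the universal property produces a colimit-preserving symmetric monoidal functor $\Pres(\Pic(\cC)) \to \cC$ extending $i$. For the left-hand functor, the fiber sequence $\pic(\cC) \to \pic(\cC,\rho) \to B$ yields on infinite loop spaces a map of $E_\infty$-spaces $j\co\Pic(\cC)\to\Pic(\cC,\rho)$, which we regard as a symmetric monoidal functor of essentially small symmetric monoidal $\infty$-groupoids. Composing $j$ with the Yoneda embedding of $\Pic(\cC,\rho)$ and invoking the universal property once more produces a colimit-preserving symmetric monoidal functor $\Pres(\Pic(\cC)) \to \Pres(\Pic(\cC,\rho))$ — concretely the left Kan extension $j_!$ along $j$, left adjoint to restriction $j^*$. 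Assembling these two functors gives the required span.

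Since the statement is essentially formal, the only nontrivial ingredient — and therefore the step I expect to be the main obstacle — is the existence and naturality of the Day convolution structure together with its universal property, in particular the assertion that applying $\Pres(-)$ to a symmetric monoidal functor of essentially small symmetric monoidal $\infty$-categories returns a colimit-preserving symmetric monoidal functor. Once that is cited from \cite{lurie-htt, lurie-higheralgebra}, the proposition follows. It is worth remarking in passing that, because the Yoneda embedding is symmetric monoidal, the displayed factorization $\Pic(\cC)\to\Pres(\Pic(\cC))\to\cC$ of $i$ is one of symmetric monoidal functors, which also justifies the wording used just before the statement.
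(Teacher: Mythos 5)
Your proof is correct and follows essentially the same route as the paper's: both rely on the Day convolution symmetric monoidal structure on presheaf categories, essential smallness of the two Picard $\infty$-groupoids, and functoriality/universality of $\Pres$ in presentable symmetric monoidal $\infty$-categories. The only cosmetic difference is that the paper phrases the right-hand arrow as the composite $\Pres(\Pic(\cC)) \to \Pres(\cC^\kappa) \to \cC$ using the explicit $\Pres \dashv (-)^\kappa$ adjunction, whereas you invoke the Day convolution universal property directly against the inclusion $\Pic(\cC) \hookrightarrow \cC$; these produce the same functor.
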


\begin{proof}
  Fix a regular cardinal $\kappa$ such that the unit of $\cC$ is
  $\kappa$-compact. Then all of the objects of $\Pic(\cC)$ are
  contained inside the essentially small subcategory $\cC^\kappa$ of
  $\kappa$-compact objects.
  
  The functor $\Pres$ is the left adjoint in an adjunction between
  $\kappa$-small $\infty$-categories and $\kappa$-presentable
  $\infty$-categories; $(-)^\kappa$ is the right adjoint. Moreover,
  the tensor product of presentable $\infty$-categories is universal
  with respect to functors that are colimit-preserving in each
  variable separately; in particular, this gives us canonical
  identifications
  \[
    \Fun^{PrL}(\Pres(\Pi S_i), \cC) \simeq \Fun^{PrL}(\otimes
    \Pres(S_i), \cC)
  \]
  natural in $\cC$. This makes the functor $\Pres$ strong symmetric
  monoidal, and lifts it to a left adjoint to the functor taking a
  $\kappa$-presentable symmetric monoidal $\infty$-category to the
  symmetric monoidal subcategory of $\kappa$-compact objects. For a
  small symmetric monoidal $\infty$-category $S$, the induced
  symmetric monoidal structure on the category $\Pres(S)$ is given by
  left Kan extension: this is the Day convolution monoidal structure
  \cite{glasman-day, lurie-higheralgebra}. It is colimit-preserving in
  each variable, and for objects $s$ and $t$ of $S$ with associated
  presheaves $j_s$ and $j_t$ there is a natural isomorphism
  $j_s \otimes j_t \cong j_{s \otimes t}$.

  The Day convolution makes the functor
  $\Pres(\Pic(\cC)) \to \Pres(\Pic(\cC,\rho))$ symmetric monoidal, and
  the adjunction gives us a composite symmetric monoidal functor
  \[
    \Pres(\Pic(\cC)) \to \Pres(\cC^\kappa) \to \cC
  \]
  as desired.
\end{proof}

\begin{defn}
  Suppose that $\rho$ is an extension to $\Gamma$-grading. We define
  the category obtained by \emph{extending gradings to $\Gamma$} to be
  the symmetric presentable $\infty$-category
  \[
    \cC[\rho] = \Pres(\Pic(\cC,\rho)) \otimes_{\Pres(\Pic(R))} \cC.
  \]
\end{defn}

\begin{prop}
  As a presentable category left-tensored over $\cC$, we have
  \[
    \cC[\rho] \cong \prod_{b \in B} \cC.
  \]
  In particular, $\cC[\rho]$ is isomorphic to the category of
  $B$-graded objects of $\cC$.
\end{prop}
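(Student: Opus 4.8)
The plan is to unwind the tensor product $\cC[\rho] = \Pres(\Pic(\cC,\rho)) \otimes_{\Pres(\Pic(\cC))} \cC$ using the monoidal properties of $\Pres$ established in the previous proposition. First I would observe that the decomposition $\Pic(\cC,\rho) \cong \coprod_{b \in B} \Pic(\cC)\cdot\{b\}$ as a space with a $\Pic(\cC)$-action, noted just after the definition of the extended Picard spectrum, gives on presheaf categories an equivalence $\Pres(\Pic(\cC,\rho)) \simeq \prod_{b \in B} \Pres(\Pic(\cC))$ as a module category over $\Pres(\Pic(\cC))$ — each factor being a ``coset'' copy of $\Pres(\Pic(\cC))$ translated by $b$. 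The key point is that $\Pres$ sends coproducts of spaces to products of presentable categories (it is a left adjoint, but here the coproduct is over a discrete set and the relevant limit/colimit of presentable categories along identity-like functors agrees), and it is strong symmetric monoidal, so the $\Pres(\Pic(\cC))$-module structure on $\Pres(\Pic(\cC,\rho))$ is exactly the one induced by the $\Pic(\cC)$-action on the coproduct.

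Next I would base-change along $\Pres(\Pic(\cC)) \to \cC$. Since the tensor product of presentable $\infty$-categories commutes with (indexed) products in each variable when the indexing is over a set — more precisely, $\bigl(\prod_{b} \Pres(\Pic(\cC))\bigr) \otimes_{\Pres(\Pic(\cC))} \cC \simeq \prod_b \bigl(\Pres(\Pic(\cC)) \otimes_{\Pres(\Pic(\cC))} \cC\bigr)$ — and each factor is just $\cC$ by the unit property of the relative tensor product, we obtain $\cC[\rho] \simeq \prod_{b \in B} \cC$ as presentable categories left-tensored over $\cC$. The left-$\cC$-module structure on the right-hand side is the diagonal one twisted by the $B$-grading coming from $\rho$, which is precisely the structure of $B$-graded objects of $\cC$; unwinding the Day convolution along $\rho$ identifies the monoidal product on $\prod_{b \in B}\cC$ with the graded tensor product.

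I expect the main obstacle to be justifying the interchange of the relative tensor product with the product $\prod_{b \in B}$ and, more subtly, keeping careful track of the left-$\cC$-module structure through this interchange — it is easy to get the underlying category $\prod_{b\in B}\cC$ quickly but harder to see that the resulting $\cC$-action is the graded one rather than, say, the constant diagonal one. The cleanest route is probably to identify $\Pres(\Pic(\cC,\rho))$ not just abstractly as a product but as $\Fun(B, \Pres(\Pic(\cC)))$ with the Day-type convolution module structure over $\Pres(\Pic(\cC))$ induced by the group $B$, so that base change immediately yields $\Fun(B,\cC)$ with the analogous structure; the final sentence identifying this with ``$B$-graded objects of $\cC$'' is then essentially a definition. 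One should be mildly careful that $B$ is discrete so that $\Fun(B,-) = \prod_{b\in B}(-)$, and that all functors in sight preserve the relevant colimits so that the presentable tensor product behaves as expected.
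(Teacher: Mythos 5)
Your proposal is correct and follows essentially the same route as the paper: decompose $\Pic(\cC,\rho) \simeq \coprod_{b\in B} \Pic(\cC)$ as a category left-tensored over $\Pic(\cC)$, apply $\Pres$, base-change along $\Pres(\Pic(\cC)) \to \cC$, and identify the result with $B$-graded objects. The one phrasing difference is that the paper keeps everything as a coproduct in presentable $\infty$-categories through the base-change step (since that is what the relative tensor product is known to preserve, being colimit-preserving in each variable) and only at the very end invokes the coincidence of set-indexed coproducts and products in $\mathrm{Pr}^L$; your formulation that the tensor product ``commutes with products'' collapses these two observations and should really be justified via that coincidence rather than stated as a direct fact.
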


\begin{proof}
  Since $\Pic(\cC,\rho) \cong \coprod_{b \in B} \Pic(\cC)$ as categories
  left-tensored over $\Pic(\cC)$,
  \[
    \Pres(\Pic(\cC,\rho)) \cong \coprod^{pres}_{b \in B}
    \Pres(\Pic(\cC))
  \]
  as presentable $\infty$-categories left-tensored over
  $\Pres(\cC)$---here the coproduct taking place within
  presentable $\infty$-categories. The relative tensor product
  preserves colimits in each variable, and thus we have
  \[
    \cC[\rho] \simeq \coprod^{pres}_{b \in B} \cC
  \]
  as categories left-tensored over $\cC$. However, within presentable
  $\infty$-categories, coproducts and products over a small index set
  coincide.
\end{proof}

One source of grading extensions is the theory of strict gradings.

\begin{prop}
  Suppose that $0 \to G \to \Gamma \to B \to 0$ is an extension of
  abelian groups. Then every strict $G$-grading of $\cC$ has a
  naturally associated extension to a $\Gamma$-grading.
\end{prop}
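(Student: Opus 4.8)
The statement mirrors exactly the earlier proposition that a strictly commutative unit $\alpha$ of $R$ has a canonical lift to a formal $n$'th root, whose proof was to compose the Bockstein $\mb Z/n \to \Sigma\mb Z$ with the classifying map $\mb Z \to \gl_1(R)$. Here the roles are played by the Picard spectrum instead of the unit spectrum, and by a general abelian group extension $0 \to G \to \Gamma \to B \to 0$ instead of $0 \to n\mb Z \to \mb Z \to \mb Z/n \to 0$. So the plan is to produce, from the extension, a map $B \to \Sigma G$ of Eilenberg--Mac Lane spectra classifying it, and then compose with the suspension of the given strict $G$-grading $\sigma\co G \to \pic(\cC)$.

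First I would recall that an extension $0 \to G \to \Gamma \to B \to 0$ of abelian groups is classified by an element of $\Ext^1(B,G)$, and, using identification (2) from the conventions section, this is the same as a homotopy class of maps $\beta\co B \to \Sigma G$ of Eilenberg--Mac Lane spectra, the extension being recovered as the fiber of $\beta$. Next, given a strict $G$-grading $\sigma\co G \to \pic(\cC)$ (equivalently an $E_\infty$-map $G \to \Pic(\cC)$), I would form the composite
\[
  \rho\co B \too{\beta} \Sigma G \too{\Sigma\sigma} \Sigma\pic(\cC).
\]
I then need to check that $\rho$ is an extension to a $\Gamma$-grading in the sense of the relevant definition: that is, that the induced map $B \to \Sigma\pi_0\pic(\cC)$ classifies an extension $0 \to \pi_0\pic(\cC) \to \Gamma' \to B \to 0$, and that $\Gamma'$ is the pushforward of $\Gamma$ along the underlying-grading map $\bar\sigma\co G \to \pi_0\pic(\cC)$ induced by $\sigma$. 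This is a diagram chase: the composite $B \too{\beta} \Sigma G \too{\Sigma\bar\sigma} \Sigma\pi_0\pic(\cC)$ corresponds under $\Ext^1$ to the image of the class of $\Gamma$ under $\bar\sigma_*\co \Ext^1(B,G) \to \Ext^1(B,\pi_0\pic(\cC))$, which is exactly the pushforward extension; naturality of the identification $[-,\Sigma(-)] \cong \Ext^1(-,-)$ in the target variable gives this for free.

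The construction is evidently natural in the extension $\Gamma$ and in the grading $\sigma$, so ``naturally associated'' is immediate once the identifications are set up. The only point requiring a little care — and the closest thing to an obstacle — is the compatibility of the two uses of the symbol $\Gamma$: the definition of ``extension to a $\Gamma$-grading'' is phrased in terms of an extension of $\pi_0\pic(\cC)$, whereas the hypothesis here gives an extension of the possibly-smaller group $G$. Resolving this amounts to observing that $\bar\sigma_*$ sends the class of $0 \to G \to \Gamma \to B \to 0$ to the class of $0 \to \pi_0\pic(\cC) \to \Gamma\oplus_G\pi_0\pic(\cC) \to B \to 0$, and it is this pushed-forward extension that the associated $\Gamma$-grading refines; I would state this explicitly so the reader can match notation. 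I expect the whole argument to be a single short paragraph once the $\Ext$--suspension dictionary is invoked.
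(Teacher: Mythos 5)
Your proof is correct and takes exactly the same approach as the paper: classify the extension by a map $B \to \Sigma G$ and compose with the strict $G$-grading $G \to \pic(\cC)$. The extra care you take in noting that the output is really a grading by the pushforward extension $\Gamma \oplus_G \pi_0\pic(\cC)$ is a helpful clarification of a notational abuse the paper leaves implicit.
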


\begin{proof}
  The extension $\Gamma$ is classified by a map $B \to \Sigma G$,
  which can be composed with the strict $G$-grading $G \to \pic(\cC)$. 
\end{proof}

\begin{exam}
  Since $MO$ has a strict $\mb Z$-grading, for any $MO$-algebra $R$ we
  can then adjoin invertible objects $L$ to the category of
  $MO$-modules such that $L^{\otimes n} \simeq \Sigma MO$, or extend
  to any grading $\mb Z \subset \Gamma$. For example, we can embed
  the category of $MO$-modules into the category of $\mb Q/\mb Z$-graded
  $MO$-modules, giving the latter a symmetric monoidal structure
  where shifts by integers are extended to shifts by rational numbers.
\end{exam}

\begin{exam}
  Similarly, the strict $2\mb Z$-grading on $MU$ allow us to adjoin
  invertible objects $L$ such that $L^{\otimes n} \cong \Sigma^2
  MU$. (Note that if we take $n=2$ in this construction, we find that
  the object $\Sigma^{-1} L$ is a nontrivial object with
  $(\Sigma^{-1} L)^{\otimes 2} \cong MU$.) This allows us to extend
  from a $\mb Z$-grading on $MU$-modules to a grading over
  $\mb Z \oplus_{2\mb Z} \mb Q \cong \mb Q \times \mb Z/2$. Similar
  constructions are possible with $MU$-algebras like $ku$ or with the
  algebraic $K$-theory spectrum $K\mb C$.
\end{exam}
\section{Grading obstructions}
\label{sec:grading-obstructions}

As in \S\ref{sec:root-obstructions}, we can identify $\pic(h\cC)$ with
the first nontrivial stage $\tau_{\leq 1} \pic(\cC)$ in the Postnikov
tower for $\pic(\cC)$. For us to lift a map
$\bar\rho\co B \to \Sigma \pi_0 \pic(\cC)$ to the first Postnikov
stage, it is necessary and sufficient that the associated obstruction
\[
  B \too{\bar\rho} \Sigma \pi_0 \pic(\cC) \too{k} \Sigma^3 \pi_1 \pic(\cC)
\]
is trivial. Two different choices of lift to a map $B \to \Sigma \pic(h\cC)$
differ by an element of $[B \to \Sigma^2 \pi_1 \pic(\cC)]$.

This can be concisely packaged into the following result.
\begin{prop}
  Given an extension $0 \to \pi_0 \pic(\cC) \to \Gamma \to B \to 0$,
  the lifts of the map $\bar\rho\co B \to \Sigma \pi_0 \pic(\cC)$
  to a map $\rho_{\leq 1}\co B \to \Sigma \pic(h\cC)$ are in bijective
  correspondence with extensions of the twist homomorphism
  $\tau \co \pi_0 \pic(\cC) \to \pi_1 \pic(\cC)[2]$ to all of
  $\Gamma$.
\end{prop}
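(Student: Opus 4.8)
The plan is to exhibit both sides as torsors over $\Hom(B,\pi_1\pic(\cC)[2])$ that are inhabited under the same condition, and then to pin down a canonical bijection. Recall that $\pic(h\cC)=\tau_{\leq 1}\pic(\cC)$ sits in the $k$-invariant fiber sequence $\pic(h\cC)\to\pi_0\pic(\cC)\too{\kappa}\Sigma^2\pi_1\pic(\cC)$ in which, under the identification $[\pi_0\pic(\cC),\Sigma^2\pi_1\pic(\cC)]\cong\Hom(\pi_0\pic(\cC),\pi_1\pic(\cC)[2])$, the map $\kappa$ is the twist homomorphism $\tau$. Suspending the truncation map $\pic(h\cC)\to\pi_0\pic(\cC)$ and mapping out of $B$, the space of lifts of $\bar\rho$ to $\Sigma\pic(h\cC)$ is the homotopy fiber $L$ of $\Map(B,\Sigma\pic(h\cC))\to\Map(B,\Sigma\pi_0\pic(\cC))$ over $\bar\rho$. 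The fiber of $\Sigma\pic(h\cC)\to\Sigma\pi_0\pic(\cC)$ is $\Sigma^2\pi_1\pic(\cC)$, and all objects in sight are spectra, so translating by a chosen point shows that, when $L\neq\emptyset$, there is an equivalence $L\simeq\Map(B,\Sigma^2\pi_1\pic(\cC))$; hence $\pi_0 L$ is then a simply transitive set under $[B,\Sigma^2\pi_1\pic(\cC)]\cong\Hom(B,\pi_1\pic(\cC)[2])$, the action being translation through the fiber inclusion $\Sigma^2\pi_1\pic(\cC)\to\Sigma\pic(h\cC)$.

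It remains to match the nonemptiness conditions. The obstruction to lifting $\bar\rho$ is the composite $B\too{\bar\rho}\Sigma\pi_0\pic(\cC)\too{\Sigma\kappa}\Sigma^3\pi_1\pic(\cC)$, the second map being the next term of the (suspended) fiber sequence. By the compatibility of the Eilenberg--Mac Lane identifications with composition — the Yoneda-pairing statement recalled in the conventions — this class lies in $\Ext(B,\pi_1\pic(\cC)[2])\subset[B,\Sigma^3\pi_1\pic(\cC)]$ and equals the Yoneda product of $\tau\in\Hom(\pi_0\pic(\cC),\pi_1\pic(\cC)[2])$ with the class of the extension $\Gamma$ in $\Ext(B,\pi_0\pic(\cC))$, i.e. with $\bar\rho$ itself. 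The last clause of that same statement says that this product vanishes precisely when $\tau\co\pi_0\pic(\cC)\to\pi_1\pic(\cC)[2]$ extends to a homomorphism $\Gamma\to\pi_1\pic(\cC)[2]$. On the other hand, two extensions of $\tau$ over $\Gamma$ differ by a homomorphism $\Gamma\to\pi_1\pic(\cC)[2]$ that is trivial on $\pi_0\pic(\cC)$, hence factors uniquely through $\Gamma/\pi_0\pic(\cC)=B$, so the set of such extensions is, when nonempty, simply transitive under $\Hom(B,\pi_1\pic(\cC)[2])$. Thus both sides are inhabited under the same hypothesis and, in that case, are torsors under the same group; in particular they are in bijection (and both empty otherwise).

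For a canonical bijection, send a lift $\rho_{\leq 1}\in L$ — a map $B\to\Sigma\pic(h\cC)$ together with a homotopy witnessing that it lifts $\bar\rho$ — to the $k$-invariant of $F:=\mathrm{fib}(\rho_{\leq 1})$. One checks that $F$ is a $1$-truncated connective spectrum with $\pi_1 F\cong\pi_1\pic(\cC)$; that the homotopy datum identifies $\pi_0 F$ with $\Gamma$ compatibly with the extensions $0\to\pi_0\pic(\cC)\to\pi_0 F\to B\to 0$ and $0\to\pi_0\pic(\cC)\to\Gamma\to B\to 0$; and that the rotated fiber sequence furnishes a map $\pic(h\cC)\to F$ realizing $\pi_0\pic(\cC)\hookrightarrow\Gamma$ and the identity on $\pi_1$, so that naturality of $k$-invariants forces $k_F\in\Hom(\Gamma,\pi_1\pic(\cC)[2])$ to restrict to $\tau$. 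This defines a map from $\pi_0 L$ to the set of extensions of $\tau$, and the key — and only substantive — point is that it is equivariant for the two torsor structures: adding to $\rho_{\leq 1}$ the image of $\alpha\in\Hom(B,\pi_1\pic(\cC)[2])$ under $\Sigma^2\pi_1\pic(\cC)\to\Sigma\pic(h\cC)$ changes $F$, hence $k_F$, by the pullback of $\alpha$ along $\Gamma\twoheadrightarrow B$. I would prove this by noting that forming fibers is additive in the defining map and that the $k$-invariant of a fiber is functorial in it, so the displacement of $k_F$ is exactly the $k$-invariant of the fiber of the image of $\alpha$. An equivariant map between nonempty torsors over the same group is automatically a bijection, which completes the argument. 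I expect this equivariance bookkeeping — and, secondarily, the verification that $F$ really has the asserted homotopy groups and comparison map — to be where the genuine effort lies; the rest is formal manipulation of the identifications in the conventions together with the last clause of point (5).
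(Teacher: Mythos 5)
Your core argument --- that the lifting obstruction is the Yoneda product of the twist class $\tau$ with the extension class $[\Gamma]\in\Ext(B,\pi_0\pic(\cC))$, that this product vanishes if and only if $\tau$ extends to $\Gamma$ by the last clause of point (5) of the conventions, and that both sides are (when nonempty) torsors under $\Hom(B,\pi_1\pic(\cC)[2])$ --- is exactly the argument the paper packages into this proposition from the paragraph that precedes it, and it already suffices to establish the stated bijection. Your further construction of a \emph{canonical} bijection via $\rho_{\leq1}\mapsto k_{F}$ where $F=\mathrm{fib}(\rho_{\leq1})$ is a genuine refinement the paper does not spell out, and the verification that $k_F$ restricts to $\tau$ by naturality of $k$-invariants along $\pic(h\cC)\to F$ is correct.

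The one loose step is the equivariance. ``Forming fibers is additive in the defining map'' is not a precise statement, and as phrased it would have you comparing $k_F\in\Hom(\Gamma,\pi_1\pic(\cC)[2])$ with the $k$-invariant of $\mathrm{fib}(i\alpha)$, whose $\pi_0$ is the \emph{split} extension $B\oplus\pi_0\pic(\cC)$, a map out of a different group. A clean way to close the gap: apply the octahedral axiom to the composable pair $B\too{\rho_{\leq1}}\Sigma\pic(h\cC)\too{p}\Sigma\pi_0\pic(\cC)$. It yields a fiber sequence $F\to\Gamma\to\Sigma^2\pi_1\pic(\cC)$ in which the last map \emph{is} $k_F$, and explicitly it is the lift of $\rho_{\leq1}\circ j\colon\Gamma\to\Sigma\pic(h\cC)$ (where $j\colon\Gamma\to B$ is the fiber inclusion, on $\pi_0$ the projection $\Gamma\twoheadrightarrow B$) through $\mathrm{fib}(p)=\Sigma^2\pi_1\pic(\cC)$ determined by the canonical nullhomotopy of $p\rho_{\leq1}j=\bar\rho j$. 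This description is manifestly additive in $\rho_{\leq1}$: replacing $\rho_{\leq1}$ by $\rho_{\leq1}+i\alpha$ (with $i$ the fiber inclusion of $\Sigma^2\pi_1\pic(\cC)$ and $\alpha\colon B\to\Sigma^2\pi_1\pic(\cC)$) changes the lift by $\alpha\circ j$, i.e.\ by the pullback of $\alpha\in\Hom(B,\pi_1\pic(\cC)[2])$ along $\Gamma\twoheadrightarrow B$, exactly as you claimed. With that substituted in, the proposal is complete.
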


\begin{exam}
  This construction can recover the twisted $\mb Z/2$-graded
  categories of Rezk \cite[\S 2]{rezk-wilkerson}.\footnote{Rezk gives
    these constructions in the case where $\cC$ is merely additive,
    which is not covered by our assumption that $\cC$ is presentable.}
  Let $\cC$ be an ordinary presentable symmetric monoidal category and
  $\omega$ an invertible object of $\cC$. We would like to construct a
  larger category $\cC[\sqrt{\omega}]$ where $\omega$ is the square of
  another invertible module. This would be a symmetric monoidal
  category of $\mb Z/2$-graded objects $(A_0,A_1)$ of $\cC$,
  representing $A_0 \oplus (\sqrt{\omega} \otimes A_1)$, with a tensor
  product satisfying
  \[
    (A_0,A_1) \otimes (B_0, B_1) \cong \Big((A_0 \otimes B_0) \oplus
    (\omega\otimes A_1 \otimes B_1), \\(A_0 \otimes B_1) \oplus (A_1
    \otimes B_0)\Big).
  \]
  In this case, the underlying category is ordinary and so
  $\pic(\cC) = \pic(h\cC)$. For us to extend gradings, it is necessary
  and sufficient that $\omega$ be symmetric. If this is the case, the
  possible choices of extension represent choices of 2-torsion element
  $\tau(\sqrt{\omega}) \in \Aut_{\cC}(\mb I)$, a Koszul sign rule for
  the twist isomorphism on the square root of $\omega$. If $\cC$ is
  additive, choosing $\tau(\sqrt{\omega}) = -1$ recovers Rezk's
  \emph{$\omega$-twisted tensor product}.
\end{exam}

\section{Brauer roots}

Just as the unit spectrum is extended by the Picard spectrum, the
Picard spectrum is extended by the Brauer spectrum
\cite{brauerspectra,haugseng-morita}. Fix a symmetric monoidal
presentable $\infty$-category $\cC$ and let $\Cat_\cC$ be the category
of presentable $\infty$-categories left-tensored over $\cC$. This has
a symmetric monoidal under $\otimes_{\cC}$. We write
$\Br(\cC) = \Pic(\Cat_\cC)$ for the Brauer space parametrizing
invertible objects of $\Cat_\cC$ that admit a compact generator, and
$\br(\cC)$ for the associated spectrum.

The unit of $\Cat_{\cC}$ is $\cC$ itself, and all $\cC$-linear
functors are of the form $X \mapsto X \otimes B$ for some $B \in \cC$;
in particular, this identifies the $\cC$-linear functors $\cC \to \cC$
with $\cC^{\simeq}$ itself, with composition given by the tensor in
$\cC$. As a result, the space of self-equivalences of the unit is
$\Pic(\cC)$, and so there is a fiber sequence
\[
  \Sigma \pic(\cC) \to \br(\cC) \to \pi_0 \br(\cC).
\]

\begin{defn}
  Suppose that $\cC$ is a presentable symmetric monoidal
  $\infty$-category, $B$ is an abelian group regarded as a discrete
  symmetric monoidal category, and that $\rho\co B \to \Cat(\cC)$ is a
  symmetric monoidal functor. We define the \emph{trivializing
    category} $\mathcal{T}_\rho$ to be the homotopy colimit of $\rho$,
  calculated in $\Cat_{\cC}$.
\end{defn}

\begin{rmk}
  Since $B$ is a discrete category, there is an equivalence in
  $\Cat_{\cC}$ of the form
  \[
    \mathcal{T}_\rho \simeq \coprod^{\Cat_{\cC}}_{b \in B} \rho(b)
    \simeq \prod_{b \in B} \rho(b).
  \]
\end{rmk}

\begin{prop}
  Suppose that $B$ is an abelian group and that $\rho\co B \to
  \cC$ is a symmetric monoidal functor with trivializing category
  $\mathcal{T}_\rho$.
  \begin{enumerate}
  \item The map $\rho$ has a natural lift to a map $B \to \Br(\cC)$.
  \item $\mathcal{T}_\rho$ has a natural lift to a symmetric monoidal
    $\infty$-category under $\cC$.
  \item The symmetric monoidal $\infty$-category $\mathcal{T}_\rho$ is
    universal among symmetric monoidal presentable $\infty$-categories
    under $\cC$ with a chosen commutative diagram
    \[
      \xymatrix{
        B \ar[r] \ar[d] &
        \br(\cC) \ar[d] \\
        \ast \ar[r] &
        \br(\mathcal{T}_\rho).
      }
    \]
    In particular, for any $b \in B$ there is an equivalence
    $\phi_b\co \mathcal{T}_\rho \to \mathcal{T}_\rho \otimes_{\cC}
    \rho(b)$ of presentable $\infty$-categories left-tensored over
    $\mathcal{T}_\rho$, and there are chosen equivalences of functors
    $\phi_a \otimes 1 \circ \phi_b \too{\sim} \phi_{ab}$.
  \end{enumerate}
\end{prop}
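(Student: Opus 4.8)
The plan is to mimic, one categorical level up, the proof of Proposition~\ref{prop:universalproperty}, replacing the role of $\Pic(\cC) \subset \cC$ by $\Br(\cC) = \Pic(\Cat_\cC)$ and the Thom-type colimit in $\cC$ by the colimit in $\Cat_\cC$. The essential input is again the work of Antol\'in-Camarena--Barthel: a symmetric monoidal functor out of a grouplike $E_\infty$-space lands in the Picard space of the target, so colimits of such diagrams carry a commutative algebra structure and enjoy a Thom-type universal property. We apply this \emph{inside} the symmetric monoidal $\infty$-category $\Cat_\cC$ of presentable $\infty$-categories left-tensored over $\cC$.

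\emph{Step 1 (the lift to $\Br(\cC)$).} Given $\rho\co B \to \cC$ symmetric monoidal with $B$ an abelian group, I would first observe --- exactly as in the Remark following the definition of strict gradings --- that since $B$ is grouplike, $\rho$ factors through the invertible objects; but the relevant ``objects'' now are the $\cC$-linear categories $\Pres(\text{-})$ freely generated by the invertible objects of $\cC$, so post-composing with $\cC \to \Pres(\Pic(\cC)) \to \Cat_\cC$ (the functor supplied by the Day-convolution argument used in \S6) gives a symmetric monoidal functor $B \to \Cat_\cC$ landing in invertibles, i.e. a map $B \to \Br(\cC)$. One must check the target consists of dualizable/compactly-generated invertible objects so that it really lands in $\Pic(\Cat_\cC) = \Br(\cC)$ rather than a larger group of $\otimes$-invertibles; this follows because the image objects are built from $\kappa$-compact generators.

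\emph{Step 2 (the algebra structure and the decomposition).} Since $B$ is discrete, $\mathcal{T}_\rho = \colim_B \rho$ is computed as $\coprod^{\Cat_\cC}_{b} \rho(b) \simeq \prod_b \rho(b)$ (coproducts and products over a small set agree in presentable $\infty$-categories, exactly as in the proof that $\cC[\rho] \cong \prod_{b\in B}\cC$). By \cite[Theorem 2.8]{antolin-camarena-barthel-thom} applied in $\Cat_\cC$, this colimit is a commutative algebra object of $\Cat_\cC$, i.e. a symmetric monoidal presentable $\infty$-category under $\cC$. This gives parts (1) and (2).

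\emph{Step 3 (the universal property).} For (3) I would apply \cite[Theorem 2.13]{antolin-camarena-barthel-thom} to the symmetric monoidal functor $B \to \Cat_\cC$: maps $\mathcal{T}_\rho \to \dD$ in $\CAlg(\Cat_\cC)$ are identified with lax symmetric monoidal lifts of $B \to \Pic(\Cat_\cC) \to \Cat_\cC$ along the forgetful functor $(\Cat_\cC)_{/\dD} \to \Cat_\cC$. As in the proof of Proposition~\ref{prop:universalproperty}, grouplikeness of $B$ forces each object $\mathcal{M} \to \dD$ in the image to be invertible in $(\Cat_\cC)_{/\dD}$, which --- after passing through the base-change functor $(\Cat_\cC)_{/\dD} \to (\Cat_\dD)_{/\dD}$, $(\mathcal{M}\to\dD)\mapsto(\dD\otimes_\cC\mathcal{M}\to\dD)$ --- happens precisely when the adjoint structure map $\dD \otimes_\cC \mathcal{M} \to \dD$ is an equivalence; conversely any such $\mathcal{M}\to\dD$ with invertible adjoint yields an inverse object by tensoring with $\mathcal{M}^{-1}$. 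Unwinding, this says a map $\mathcal{T}_\rho \to \dD$ of $\cC$-algebras is the same as a nullhomotopy of $B \to \br(\cC) \to \br(\dD) = \br(\LMod_{\dD})$, i.e. a filler of the displayed square, and the adjoint equivalences $\dD\otimes_\cC\rho(b)\simeq\dD$ are the promised $\phi_b$, with coherences $\phi_a\otimes 1\circ\phi_b\xrightarrow{\sim}\phi_{ab}$ coming from the lax-monoidal structure of the lift.

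\emph{Main obstacle.} The delicate point is Step~3's identification of ``invertible in the slice $(\Cat_\cC)_{/\dD}$'' with ``the adjoint structure map is an equivalence,'' together with the verification that the base-change functor $(\Cat_\cC)_{/\dD}\to(\Cat_\dD)_{/\dD}$ is symmetric monoidal and detects this invertibility. One categorical level down this was routine ($R$-module base change), but in $\Cat_\cC$ one is manipulating invertible \emph{categories} and compact generators, so the compatibility of the Day-convolution/$\Pres$ machinery of \S6 with passage to slices --- and the fact that an invertible object of $(\Cat_\cC)_{/\dD}$ necessarily has compactly generated total category, so that it lies in $\Br$ rather than merely being $\otimes$-invertible --- is where the genuine work lies. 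I expect this to follow formally from $\Br(-) = \Pic(\Cat_{(-)})$ and the fact that $\Pic$ of a symmetric monoidal $\infty$-category is insensitive to whether one works with all invertibles or only the compact ones when the unit is compact, but it must be stated carefully.
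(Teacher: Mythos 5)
Your overall plan matches the paper's exactly: the paper's proof of this proposition is a one-liner, ``This is Proposition~\ref{prop:universalproperty}, applied to the (large) $\infty$-category of presentable $\infty$-categories.'' That is, one substitutes $\Cat_\cC$ for the ambient category $\cC$ of Proposition~\ref{prop:universalproperty}, so that $\Pic$ of the ambient category becomes $\Br(\cC) = \Pic(\Cat_\cC)$ and commutative algebra objects become symmetric monoidal presentable $\infty$-categories under $\cC$. Your Steps~2 and~3 re-derive exactly what that substitution supplies, so they land in the right place.

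Your Step~1 is confused, and the source appears to be a typo in the statement itself: the preceding Definition takes $\rho\co B \to \Cat(\cC)$, not $\rho\co B \to \cC$, and indeed the colimit defining $\mathcal{T}_\rho$ is computed in $\Cat_\cC$, so $\rho$ must already take values there. The composite ``$\cC \to \Pres(\Pic(\cC)) \to \Cat_\cC$'' that you invoke is not something the paper constructs --- in \S 6 the relevant arrows go the other way, $\Pres(\Pic(\cC)) \to \cC$, and neither of these gives a functor out of $\cC$ into $\Cat_\cC$ of the kind your sentence requires. With the corrected hypothesis, part~(1) is immediate and requires no construction: as in the Remark following the definition of strict gradings, a symmetric monoidal functor from a grouplike $E_\infty$-space into $\Cat_\cC$ automatically factors through $\Pic(\Cat_\cC) = \Br(\cC)$, which is exactly the first assertion of Proposition~\ref{prop:universalproperty} in this ambient category.

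Your ``Main obstacle'' paragraph does flag a legitimate subtlety that the paper's one-line citation elides: the paper defines $\Br(\cC)$ as the invertible objects of $\Cat_\cC$ \emph{admitting a compact generator}, and neither grouplikeness of $B$ nor Proposition~\ref{prop:universalproperty} by itself controls compact generation of $\rho(b)$ or of the slice objects; nor is $\Cat_\cC$ literally a presentable $\infty$-category in the sense the earlier proposition assumes. In practice one either assumes this of $\rho$, or invokes the identification $\Br(\cC) = \Pic(\Cat_\cC)$ in a setting where every invertible $\cC$-linear presentable $\infty$-category is automatically compactly generated (and works in a larger universe for the size issue). The paper treats this as understood; your instinct to flag it is sound, but it is not a flaw unique to your write-up.
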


\begin{proof}
  This is Proposition~\ref{prop:universalproperty}, applied to the
  (large) $\infty$-category of presentable $\infty$-categories.
\end{proof}

\begin{exam}
  Let $\cC = \LMod_R$ where $R$ is a commutative ring spectrum. Then
  each object of $\Br(R)$ is represented by the category of right
  modules over an Azumaya $R$-algebra $Q$ which is well-defined up to
  Morita equivalence \cite{antieau-gepner-brauergroups, toen-azumaya,
    baker-richter-szymik-brauer}, see also
  \cite[5.13]{brauerspectra}. Given a symmetric monoidal functor
  $\rho\co B \to \Br(R)$, we can therefore choose algebras $Q(b)$ so
  that there is an equivalence
  \[
    \mathcal{T}_\rho \simeq \prod_{b \in B} \RMod_{Q(b)}.
  \]
  The symmetric monoidal structure on this category takes more work to
  describe. The symmetric monoidal structure on $\rho$ gives Morita
  equivalences between $Q(p) \otimes_R Q(q)$ and $Q(p+q)$, which are
  expressed by $Q(p) \otimes_R Q(q) \otimes_R Q(p+q)^\op$-modules
  $M_{p,q}$. The symmetric monoidal structure is given by a formula of
  the form
  \[
    (X_b)_{b \in B} \otimes (Y_b)_{b \in B} \cong \left(\bigoplus_{p +
        q = b} (X_p \otimes Y_q) \mathop\otimes_{Q(p) \otimes Q(q)}
      M_{p,q}\right)_{b \in B}.
  \]
  However, describing the full symmetric monoidal structure in this
  fashion would require us to carefully express coherence relations
  between tensor products of the bimodules $M_{p,q}$. We can think of
  these objects as ``coefficients for the multiplication'' that give
  the symmetric monoidal structure on this product category.
\end{exam}

\begin{rmk}
  The first $k$-invariant in the Brauer spectrum is a map
  $\pi_0 \br(\cC) \to \Sigma^2 \pi_0 \pic(\cC)$, and this is still
  expressed by the twist self-equivalence. However, now this twist
  self-equivalence occurs on the level of module categories. For a
  commutative ring spectrum $R$ with an Azumaya $R$-algebra $Q$, the
  twist equivalence of $\RMod_{Q \otimes_R Q}$ is expressed by
  tensoring with the $Q \otimes_R Q$-bimodule
  $Q \otimes_R Q^{\tau}$, where the action on the left is the standard
  one and on the right factors through the twist automorphism. The
  fact that $Q \otimes_R Q$ is Azumaya means that this bimodule must
  be of the form $Q \otimes_R Q \otimes_R \tau(Q)$ for some
  $\tau(Q) \in \Pic(R)$.\footnote{The algebraic version of this
    assignment, taking $k$-algebra automorphisms of an Azumaya algebra
    $T$ to elements in the Picard group, is part of the
    Rosenberg--Zelinsky exact sequence
    \[
      1 \to k^\times \to T^\times \to \Aut_{\Alg(k)}(T) \to
      \pi_0 \pic(k)
    \]
    that expresses the potential failure of algebra automorphisms to
    be inner. If $k$ is a field this recovers the Noether--Skolem
    theorem.}

  More concretely, we can identify $\tau(Q)$ with the $R$-module
  \[
    F_{Q\otimes_R Q\mbox{-}\text{bimod}}(Q \otimes_R Q, (Q \otimes_R Q)^\tau).
  \]
  This assignment remains relatively mysterious to the author. In
  algebraic examples it is trivial, which is shown by
  descent-theoretic methods in \cite{mo-brauer-to-pic}.
\end{rmk}

\section{Remarks on algebraic $K$-theory}

We close with some conjectural remarks on the relationship with
algebraic $K$-theory.

We recall the following from \cite[5.11]{brauerspectra}. The functor $\Mod$ is
symmetric monoidal by \cite[4.8.5.16]{lurie-higheralgebra}: for $A$
and $B$ $R$-algebras, the tensor product in $\Mod_R$-linear
$\infty$-categories has a natural equivalence
\[
  \Mod_A \otimes \Mod_B \simeq \Mod_{A \otimes_R B}.
\]
Moreover, by \cite[8.6]{gepner-groth-nikolaus-universality} the
algebraic $K$-theory functor is lax symmetric monoidal, and hence
induces a functor
\[
  \Cat_R \to \Mod_{KR}.
\]
While this functor is only lax symmetric monoidal, it does send the
unit $\Mod_R$ to the unit $KR$, and hence the restriction to the
unit component is a symmetric monoidal functor
\[
  B\Pic(R) \to BGL_1(KR).
\]
In other words, there is a spectrum map $\pic(R) \to \gl_1(KR)$,
inducing a map $\Sigma \pic(R) \to \pic(KR)$. (The author sees no
evidence that this extends to a map $\br(R) \to \pic(KR)$.)

A map $\rho\co B \to \Sigma \pic(R)$, which can be used to embed
$\Mod_R$ into a symmetric monoidal category of $B$-graded modules,
therefore also induces a map $B \to \Sigma \gl_1 KR$, which can be
used to extend the unit group of the algebraic $K$-theory
spectrum. Moreover, because $B$ is discrete the additive structure of
the extension of $\Mod_R$ and the additive structure of the extension
of $KR$ are straightforward. The universal property can be used to
shown that the algebraic $K$-theory functor takes the category of
compact $B$-graded $R$-modules, with the symmetric monoidal
structure determined by $\rho$, to the induced extension of $KR$.

\bibliography{../masterbib}
\end{document}